\DeclareMathOperator{\GL}{GL}
\DeclareMathOperator{\PGL}{PGL}
\DeclareMathOperator{\Cent}{Cent}
\DeclareMathOperator{\Spec}{Spec}
\DeclareMathOperator{\Gm}{{\mathbf G}_m}
\DeclareMathOperator{\Aut}{Aut}
\DeclareMathOperator{\Gal}{Gal}
\newcommand{\Aff}{\mathbb {A}}
\newcommand{\Pro}{\mathbb {P}}
\DeclareMathOperator{\ZZ}{{\mathbb Z}}
\DeclareMathOperator{\et}{\text{\it \'et}}
\newtheorem{lem}{Lemma}[section]
\newtheorem*{lem*}{Lemma}
\newtheorem*{thm*}{Theorem}
\newtheorem{thm}[lem]{Theorem}
\newtheorem{cor}[lem]{Corollary}
\theoremstyle{definition}{    }
\theoremstyle{definition}{   }
\theoremstyle{definition}{  \newtheorem{defn}[lem]{Definition} }
\theoremstyle{definition}{  \newtheorem{conj}[lem]{Conjecture} }
\let\l\left
\let\r\right
\let\semil\rtimes
\newcommand{\st}{\scriptstyle}
\begin{document}

\selectlanguage{english}

\title{Torsors of isotropic reductive groups over Laurent polynomials}

\author{Anastasia Stavrova}
\thanks{
The author is a winner of the contest ``Young Russian Mathematics''. The work was supported
by the Russian Science Foundation grant 19-71-30002.}
\address{Chebyshev Laboratory, St. Petersburg State University,
St. Petersburg, Russia}
\email{anastasia.stavrova@gmail.com}
\subjclass[2010]{14F20, 20G35, 17B67, 19B28, 11E72}
\keywords{isotropic reductive group, loop reductive group, Laurent polynomials, $G$-torsor, non-stable $K_1$-functor,
Whitehead group}

\maketitle

\begin{abstract}
Let $k$ be a field of characteristic 0. Let $G$ be a reductive group over the ring of Laurent polynomials
$R=k[x_1^{\pm 1},...,x_n^{\pm 1}]$. We prove that $G$ has isotropic rank $\ge 1$ over $R$ iff it has isotropic rank $\ge 1$
over the field of fractions $k(x_1,\ldots,x_n)$ of $R$, and if this is the case, then
the natural map $H^1_{\et}(R,G)\to H^1_{\et}(k(x_1,\ldots,x_n),G)$ has trivial kernel, and $G$ is loop reductive, i.e.
contains a maximal $R$-torus.
In particular, we settle in positive the conjecture
of V. Chernousov, P. Gille, and A. Pianzola that $H^1_{Zar}(R,G)=*$ for such groups $G$.
We also deduce that if $G$ is a reductive group over $R$ of isotropic rank $\ge 2$, then
the natural map of non-stable $K_1$-functors
$K_1^G(R)\to K_1^G\bigl( k((x_1))...((x_n)) \bigr)$ is injective, and an isomorphism if $G$ is moreover semisimple.

\end{abstract}

\section{Introduction}

Let $k$ be a field of characteristic 0. Let $R=k[x_1^{\pm 1},...,x_n^{\pm 1}]$ be the
ring of Laurent polynomials over $k$.
Let $G$ be a reductive group scheme over $R$ in the sense of~\cite{SGA3}.
V. Chernousov, P. Gille, and A. Pianzola~\cite[Theorem 1.1]{ChGP-class} showed that classifying \'etale-locally trivial $G$-torsors over $R$ is equivalent
to classifying Zariski-locally trivial torsors over $R$ for all twisted $R$-forms of $G$ that are loop reductive,
i.e. contain a maximal $R$-torus. With respect to the latter problem, they proposed the following
conjecture.

\begin{conj}\cite[Conjecture 5.4]{ChGP-class}\label{conj:1}
Let $k$ be a field of characteristic 0. Let $G$ be a loop reductive group over the ring of Laurent polynomials
$R=k[x_1^{\pm 1},...,x_n^{\pm 1}]$. Assume that all semisimple quotients of $G$ are
isotropic, i.e. contain $\Gm_{,R}$. Then $H^1_{Zar}(R,G)$ is trivial.
\end{conj}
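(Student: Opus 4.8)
The plan is to derive Conjecture~\ref{conj:1} as a formal consequence of the injectivity statement announced in the abstract: for a reductive $R$-group $G$ of isotropic rank $\ge 1$ over $R$, the natural map $H^1_{\et}(R,G)\to H^1_{\et}(K,G)$ has trivial kernel, where $K=k(x_1,\ldots,x_n)$ denotes the fraction field of $R$. The work then splits into checking that the hypotheses of the conjecture place $G$ in the scope of that theorem, and a short diagram chase at the generic point.

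\textbf{Step 1: isotropy over $R$.} First I would verify that the standing hypothesis — every semisimple quotient of $G$ contains $\Gm_{,R}$ — forces $G$ to have isotropic rank $\ge 1$ over $R$. This is routine SGA3 bookkeeping: parabolic $R$-subgroups of $G$ correspond bijectively to parabolic $R$-subgroups of the semisimple quotient $G/\rad(G)$ (equivalently, of $G^{\der}$), and a semisimple $R$-group containing a split subtorus $\Gm_{,R}$ admits a proper parabolic $R$-subgroup, namely the one attached to a non-central cocharacter. Hence $G$ admits a proper parabolic $R$-subgroup, i.e. $G$ has isotropic rank $\ge 1$ over $R$. (Note that loop reductivity of $G$, assumed in the conjecture, plays no role in this step.)

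\textbf{Step 2: the diagram chase.} Next I would take an arbitrary class $[E]\in H^1_{Zar}(R,G)$, represented by a Zariski-locally trivial $G$-torsor $E$ over $\Spec R$. Since $R$ is a domain, $\Spec R$ is irreducible and its generic point $\Spec K$ lies in every nonempty open subset; trivialising $E$ over a Zariski open cover and pulling back along $\Spec K\hookrightarrow\Spec R$ then shows that $E_K$ is the trivial $G_K$-torsor. Equivalently, the composite $H^1_{Zar}(R,G)\to H^1_{\et}(R,G)\to H^1_{\et}(K,G)$ factors through $H^1_{Zar}(K,G)$, which is trivial because the Zariski site of the point $\Spec K$ has no nontrivial covers. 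Therefore the image of $[E]$ in $H^1_{\et}(R,G)$ lies in $\ker\bigl(H^1_{\et}(R,G)\to H^1_{\et}(K,G)\bigr)$, which is trivial by Step 1 and the injectivity theorem; so $E(R)\neq\emptyset$, hence $E\cong G$ as a $G$-torsor and $[E]$ is the base point of $H^1_{Zar}(R,G)$. As $[E]$ was arbitrary, $H^1_{Zar}(R,G)=*$, which is Conjecture~\ref{conj:1}. (One could also route this through the equivalence of~\cite[Theorem~1.1]{ChGP-class} between étale-locally trivial $G$-torsors over $R$ and Zariski-locally trivial torsors over all loop reductive $R$-forms, but the above is more direct.)

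\textbf{Main obstacle.} Granting the abstract's injectivity theorem, the deduction above is purely formal, so the entire difficulty is concentrated in that theorem. Its proof must go well beyond the isotropic case of the Grothendieck--Serre conjecture over regular local rings containing a field: one must also show that $G$ of isotropic rank $\ge 1$ over $R$ is loop reductive --- so that, after conjugation, $G$ becomes a Chevalley group twisted by a loop cocycle and one can attempt an induction on $n$ via $R=k[x_1^{\pm1},\dots,x_{n-1}^{\pm1}][x_n^{\pm1}]$ --- and then run a patching argument controlling the behaviour at the removed divisors $x_i=0$ while accommodating a possibly anisotropic semisimple kernel of $G$. I would expect precisely this passage from the known local and affine-line statements for isotropic reductive groups to the global Laurent-polynomial statement, in the presence of the non-constant loop structure, to be the genuinely hard part.
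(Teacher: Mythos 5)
Your deduction is correct and is essentially the paper's own route: Conjecture~\ref{conj:1} is obtained there as an immediate corollary of Theorem~\ref{thm:sc} (triviality of the kernel of $H^1_{\et}(R,G)\to H^1_{\et}(k(x_1,\ldots,x_n),G)$ for isotropic rank $\ge 1$, with $A=k$), combined with the observation that a Zariski-locally trivial torsor over the domain $R$ restricts trivially to the generic point; your Step 1 is simply the paper's definition of isotropic rank $\ge 1$, and loop reductivity is indeed not needed for the formal deduction. As you note, all the substance lies in the injectivity theorem itself, which you take as given (the paper proves it not by patching but via Quillen-type lemmas, the diagonal argument for loop reductive groups, and the Chernousov--Gille--Pianzola classification), so your argument matches the paper's treatment of this particular statement.
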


 If $G=\GL_n$, the conjecture amounts to the fact that all finitely generated projective modules
over $k[x_1^{\pm 1},...,x_n^{\pm 1}]$ are free, and it was established (in arbitrary characteristic) by R. G. Swan~\cite{Swan-Laurent}
relying on D. Quillen's proof of Serre's conjecture~\cite{Q}. More generally, if $G$ is defined over $k$,
the conjecture was proved by P. Gille and A. Pianzola~\cite{GiPi08} relying on a theorem of
M.S. Raghunathan on the triviality of $k[x_1,\ldots,x_n]$-torsors~\cite{Rag89}. Apart from that,
the conjecture was previously known for several classes of groups if $k$ is algebraically closed and $n=2$ (see
Corollary~\ref{cor:2} below);
for some twisted forms of $\GL_n$~\cite{Artamonov-crossed,ChGP-class} and of orthogonal groups~\cite{Par-Laurent}.
The isotropy condition in the statement is necessary,
since there are anisotropic forms of $\PGL_n$ over $k[x_1^{\pm 1},x_2^{\pm 2}]$ with non-trivial Zariski
cohomology~\cite[Corollary 3.22]{GiPi07}.

We establish the above conjecture in full by proving the following more general statement.

\begin{thm}\label{thm:sc}
Let $k$ be a  field of characteristic $0$, and let $G$ be a reductive group
over $R= k[x_1^{\pm 1},\ldots,x_n^{\pm 1}]$. Set $K=k(x_1,\ldots,x_n)$ and $F=k((x_1))\ldots((x_n))$.
\begin{enumerate}
\item\label{item:equiv} The following conditions on $G$ are equivalent:
\begin{enumerate}
\item $G$ has isotropic rank $\ge 1$ over $R$;
\item $G_K$ has isotropic rank $\ge 1$ over $K$;
\item $G_F$ has isotropic rank $\ge 1$ over $F$.
\end{enumerate}
\item If $G$ satisfies the equivalent conditions of~\eqref{item:equiv}, then $G$ is loop reductive
and for any regular ring $A$ containing $k$,  the natural map
$$
H^1_{\et}\bigl(k[x_1^{\pm 1},\ldots,x_n^{\pm 1}]\otimes_k A,G\bigr)\to H^1_{\et}\bigl(k(x_1,\ldots,x_n)\otimes_k A,G\bigr)
$$
has trivial kernel.
\end{enumerate}
\end{thm}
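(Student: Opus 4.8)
The plan is to derive both parts from three ingredients: (i) the loop‑cocycle description of reductive $R$‑group schemes, in the spirit of Gille--Pianzola \cite{GiPi07,GiPi08}: since $\operatorname{char} k=0$, every connected finite \'etale cover of $\Spec R$ is, up to a finite constant field extension, a Kummer cover $k'[x_1^{\pm 1/N},\ldots,x_n^{\pm 1/N}]$, so a loop reductive $G$ is obtained from a split reductive $k$‑group $G_0$ by twisting with a cocycle $z$ of $\pi_1^{\et}(\Spec R)$ valued in $\Aut(G_0)$; (ii) Bruhat--Tits theory applied iteratively to the $n$‑fold complete discretely valued field $F=k((x_1))\cdots((x_n))$; and (iii) the Grothendieck--Serre property for \emph{isotropic} reductive groups over regular local rings containing a field, together with the companion local--global (purity) principle for such groups over regular domains essentially of finite type over a field.

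For part~(1), the implications $(a)\Rightarrow(b)\Rightarrow(c)$ are immediate from the ring inclusions $R\subseteq K\subseteq F$, since a proper parabolic $R$‑subgroup of $G$ base‑changes to a proper parabolic over $K$ and over $F$. The substance is $(c)\Rightarrow(a)$, which I would split into two steps. \emph{Step A} (if $G_F$ is isotropic, then $G$ is loop reductive): writing $F_j=k((x_1))\cdots((x_j))$, so that $F_j$ is complete discretely valued with residue field $F_{j-1}$ and $F_0=k$, one analyses $G_F$ by Bruhat--Tits theory over the valuation of $F=F_n$ with residue field $F_{n-1}$, then over that of $F_{n-1}$ with residue field $F_{n-2}$, and so on down to $F_1$ with residue field $F_0=k$; combined with the fact that the loop datum defining $G$ lives over $\pi_1^{\et}(\Spec R)$ and is faithfully reflected in $G_F$ through the surjection $\Gal(\bar F/F)\twoheadrightarrow\pi_1^{\et}(\Spec R)$, the isotropy of $G_F$ forces $G$ to become split after a finite cover of $\Spec R$, necessarily of Kummer type, i.e. $G$ is loop reductive; the characteristic‑$0$ hypothesis enters here through the tameness of all the ramified covers that occur. \emph{Step B} (for a loop reductive $G$, the three isotropic ranks coincide): writing $G$ as the $z$‑twisted form of $G_0$, existence of a proper parabolic of $G$ over an $R$‑overfield $L$ is equivalent to $z$ admitting a reduction to the normalizer in $G_0$ of a proper parabolic of $G_0$; since $z$ factors through $\pi_1^{\et}(\Spec R)$ and, by the rigidity of loop cocycles, any such reduction over $L$ descends to one over $\pi_1^{\et}(\Spec R)$, this condition is independent of $L$ and is equivalent to the existence of such a parabolic over $R$ itself. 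Steps~A and B together yield $(c)\Rightarrow(a)$, completing the equivalence.

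For part~(2), assume the equivalent conditions of~(1); then $G$ is loop reductive by Step~A, which is the first assertion. For the kernel statement, by N\'eron--Popescu desingularization and the fact that $H^1_\et(-,G)$ commutes with filtered colimits of rings, we may assume that $A$ is smooth over $k$ and, splitting off connected components, an integral domain; then $B:=R\otimes_k A$ is a smooth $k$‑algebra and an integral domain, $G_B$ is isotropic (the parabolic over $R$ provided by part~(1) base‑changes to $B$), and the fraction field of $B$ contains $K\otimes_k A$, so it suffices to prove that the kernel of $H^1_\et(B,G_B)\to H^1_\et(\operatorname{Frac}(B),G_B)$ is trivial. This is the Grothendieck--Serre property for the isotropic group $G_B$ over the regular domain $B$, obtained by combining the Grothendieck--Serre conjecture over the regular local rings of $B$ (which contain the field $k$) with the local--global principle for torsors of isotropic reductive groups over $B$. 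In particular, since a Zariski‑locally trivial torsor is trivial over the field $K$, part~(2) with $A=k$ recovers Conjecture~\ref{conj:1}.

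The main obstacle is Step~A: ruling out that the passage to the large field $F$ manufactures isotropy not already present over $R$, equivalently that an isotropic $G_F$ forces $G$ to be loop reductive. This rests on the rigidity of loop cocycles --- the datum cutting out $G$ lives over $\pi_1^{\et}(\Spec R)$ and is insensitive to the transcendental elements of $F$ --- which has to be made effective through the iterated Bruhat--Tits descent along the $n$ discrete valuations of $F$, tracking the successive residual reductive groups and their maximal tori; this is also the step that genuinely requires $\operatorname{char} k=0$. Everything else is either formal base change ($(a)\Rightarrow(b)\Rightarrow(c)$) or an appeal to known results on torsors of isotropic reductive groups over regular (local) rings.
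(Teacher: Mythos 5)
Your reduction of (a)$\Rightarrow$(b)$\Rightarrow$(c) and your Step~B are fine (Step~B is essentially \cite[Corollary 7.4]{GiPi-mem}, which indeed transfers isotropy between $R$, $K$ and $F$ \emph{for loop reductive groups}), but the two load-bearing steps of your proposal have genuine gaps. In Step~A you argue that isotropy of $G_F$ forces $G$ to be loop reductive by saying that ``the loop datum defining $G$ lives over $\pi_1^{\et}(\Spec R)$'' and that $G$ ``becomes split after a finite cover of Kummer type''. This presupposes exactly what has to be proved: loop reductivity is not the statement that $G$ is split by a finite (Kummer) cover, but the much stronger statement that the defining cocycle can be chosen with values in the \emph{constant} points $\Aut(G_0)(\bar k)$, i.e. comes from $H^1(\pi_1(X,e),\Aut(G_0)(\bar k))$; an arbitrary reductive $R$-group has no ``loop datum'' to speak of, and no amount of iterated Bruhat--Tits descent over $F$ is given in your text that would produce one. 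The paper does not prove this implication directly at all: it first uses \cite[Theorem 5.2]{ChGP-class} to write the (adjoint) group $G$ as a Zariski twist, by some $\xi\in H^1_{Zar}(R,H)$, of a loop reductive group $H$ with $G_K\cong H_K$ and $G_F\cong H_F$, transfers isotropy from $F$ to $R$ for $H$ via \cite[Corollary 7.4]{GiPi-mem}, and then needs the full cohomological vanishing (Lemma~\ref{lem:loop}, giving $H^1_{Zar}(R,H)=1$) to conclude $G\cong H$. Thus loop reductivity of $G$ and the kernel statement are established \emph{together}, not sequentially as in your plan.

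The second gap is in your part~(2): after reducing to $B=R\otimes_k A$ smooth over $k$, you invoke ``the Grothendieck--Serre conjecture over the regular local rings of $B$ \ldots\ with the local--global principle for torsors of isotropic reductive groups over $B$''. No such local--global principle exists for general regular domains: Grothendieck--Serre over the local rings only shows a generically trivial torsor is Zariski-locally trivial, and Zariski-locally trivial torsors over a smooth affine domain need not be trivial (already for $G=\SL_n$, any stably nontrivial rank-$n$ bundle with trivial determinant gives a nontrivial element of the kernel of $H^1(B,\SL_n)\to H^1(\mathrm{Frac}(B),\SL_n)$). Your argument for part~(2) never uses the Laurent polynomial structure nor loop reductivity, so it would ``prove'' a false statement. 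The paper's proof of Lemma~\ref{lem:loop} is where the real work happens: the diagonal argument for loop reductive groups (Lemma~\ref{lem:diag}) identifies the pullbacks of $G$ along $x_i\mapsto z_i$ and $x_i\mapsto w_i$ over a common Kummer extension, which allows one to compare the given torsor with one for a group that is ``constant'' in the new variables $t_i$, and then Quillen-type patching for monic polynomials (Lemmas~\ref{lem:regloc-inj}--\ref{lem:zwt}, using \cite{PaStV,PaF}, \cite{CTO} and the extension results of \cite{AHW,Mo}) kills the kernel. You would need to supply arguments of this kind (or an alternative exploiting the special geometry of $\Spec R$ and loop reductivity) for both Step~A and part~(2) before the proposal can be considered a proof.
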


\begin{cor}
Let $k$ be a  field of characteristic $0$, and let $G$ be a reductive group of isotropic rank $\ge 1$ over
$R= k[x_1^{\pm 1},\ldots,x_n^{\pm 1}]$. Then $H^1_{Zar}\bigl(R,G\bigr)=1$.
\end{cor}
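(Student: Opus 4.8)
The plan is to deduce this directly from part~(2) of Theorem~\ref{thm:sc}, applied with $A=k$, using only the elementary fact that a Zariski-locally trivial torsor over an integral scheme is trivial over the generic point.

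In detail, write $K=k(x_1,\ldots,x_n)$ for the fraction field of $R$, so that $\Spec K$ is the generic point of the integral scheme $\Spec R$. Let $E$ be a $G$-torsor over $R$ representing a class in $H^1_{Zar}(R,G)$; by definition $E$ is trivialized over some nonempty, hence dense, open subset $U\subseteq\Spec R$. Since $\Spec K\subseteq U$, the pullback $E\times_R K$ has a section and is therefore a trivial $G_K$-torsor. Consequently, viewing $E$ as an \'etale-locally trivial torsor (the Zariski topology being coarser than the \'etale one), its class in $H^1_{\et}(R,G)$ lies in the kernel of the restriction map $H^1_{\et}(R,G)\to H^1_{\et}(K,G)$.

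Now $G$ has isotropic rank $\ge 1$ over $R$ by assumption, so Theorem~\ref{thm:sc}(2) with $A=k$ shows that this kernel is trivial; hence the class of $E$ in $H^1_{\et}(R,G)$ is neutral, which means $E(R)\neq\emptyset$, so $E$ is a trivial $G$-torsor and its class in $H^1_{Zar}(R,G)$ is neutral as well. As $E$ was arbitrary, $H^1_{Zar}(R,G)=1$. All of the substance here lies in Theorem~\ref{thm:sc}; the only point one must verify in this reduction is the harmless compatibility that the neutral classes of $H^1_{Zar}(R,G)$ and of $H^1_{\et}(R,G)$ both correspond precisely to torsors admitting a global section, so that triviality in the \'etale pointed set forces triviality in the Zariski one. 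I therefore expect no real obstacle at this last step — the difficulty is entirely concentrated in proving Theorem~\ref{thm:sc}, in particular in establishing loop reductivity of $G$ and the triviality of the kernel over the field of fractions.
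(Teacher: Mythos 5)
Your deduction is correct and is exactly how the corollary follows in the paper: a Zariski-locally trivial torsor is trivial at the generic point $\Spec K$, so its class lies in the kernel of $H^1_{\et}(R,G)\to H^1_{\et}(K,G)$, which is trivial by Theorem~\ref{thm:sc}(2) with $A=k$, and triviality in $H^1_{\et}$ means the torsor has a global section, hence is trivial in $H^1_{Zar}$ as well. No issues.
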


The proof of Theorem~\ref{thm:sc} relies on the ``diagonal argument'' trick for loop reductive groups~\cite{St-serr},
on the established cases of the Serre--Grothendieck conjecture~\cite{PaStV,PaF}, and on the
classification results~\cite[Theorem 1.2]{ChGP-class}.

Using the above results and a known
case of Serre's conjecture II~\cite{CTGP},  we also establish another conjecture of P. Gille and
A. Pianzola.

\begin{cor}\cite[Conjecture 6.1]{GiPi07}\label{cor:2}
Let $k$ be an algebraically closed field of characteristic $0$. Let
$G$ be a semisimple reductive group over $R=k[x_1^{\pm 1},x_2^{\pm 1}]$ having no semisimple normal subgroups
of type $A_n$, $n\ge 1$. Let $1\to \mu\to G^{sc}\to G\to 1$ be the simply connected cover of $G$.
Then the boundary map $H_{\et}^1(R,G)\to H^2_{\et} (R,\mu)$ is bijective. In
particular, if $G$ is simply connected, then $H^1_{\et}(R,G)$ is trivial.
\end{cor}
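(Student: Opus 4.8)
The plan is to obtain the corollary from Theorem~\ref{thm:sc} together with Serre's conjecture~II over the field $K=k(x_1,x_2)$, which has cohomological dimension $\le 2$ because $k$ is algebraically closed of characteristic~$0$.

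\emph{Step 1 (isotropy over $R$).} Every inner $R$-twist of $G^{sc}$, and in particular $G^{sc}$ itself, is a semisimple simply connected $R$-group whose almost-simple factors have the same Dynkin types as those of $G^{sc}$; by hypothesis none of them is of type $A_n$. Over the two-dimensional geometric field $K$, an absolutely simple simply connected group which is anisotropic is necessarily of type $A$ (\cite{CTGP}); hence each such inner twist is isotropic over $K$. By the first part of Theorem~\ref{thm:sc} it follows that $G$, $G^{sc}$, $G^{ad}$, and all their inner $R$-twists have isotropic rank $\ge 1$ over $R$, and are loop reductive.

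\emph{Step 2 (vanishing for simply connected groups; injectivity of $\partial$).} For $\xi\in H^1_{\et}(R,G)$ let ${}_\xi G^{sc}$ denote the inner twist of $G^{sc}$ by the image of $\xi$ in $H^1_{\et}(R,G^{ad})$. By Serre's conjecture~II over $K$ (\cite{CTGP}), $H^1(K,{}_\xi G^{sc})=1$; by Step~1 and the second part of Theorem~\ref{thm:sc} (applied with $A=k$), the kernel of $H^1_{\et}(R,{}_\xi G^{sc})\to H^1_{\et}(K,{}_\xi G^{sc})$ is trivial, so $H^1_{\et}(R,{}_\xi G^{sc})=1$. Taking $\xi$ trivial yields $H^1_{\et}(R,G^{sc})=1$, which is the last assertion of the corollary. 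In general, the twisting description of the fibres of the boundary map attached to $1\to\mu\to G^{sc}\to G\to 1$ shows that $\partial^{-1}(\partial\xi)$ is the image of $H^1_{\et}(R,{}_\xi G^{sc})$ in $H^1_{\et}(R,G)$, hence equals $\{\xi\}$; therefore $\partial$ is injective.

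\emph{Step 3 (surjectivity of $\partial$): the main obstacle.} One first reduces to $\mu=Z(G^{sc})$ and $G=G^{ad}$. Writing $G^{sc}=\prod_i \mathrm{R}_{S_i/R}(G_i)$ with $G_i$ absolutely simple simply connected over a finite étale $R$-algebra $S_i$ --- which, since $k$ is algebraically closed, is again a finite product of Laurent polynomial rings in two variables over $k$ --- Shapiro's lemma reduces to a single $G_i$ over such a ring $R'$; and from surjectivity of $H^1_{\et}(R',G^{ad}_i)\to H^2_{\et}(R',Z(G_i))$ one deduces surjectivity of $H^1_{\et}(R',G_i/\mu)\to H^2_{\et}(R',\mu)$ for every central subgroup $\mu$, using that the action of $H^1_{\et}(R',Z(G_i)/\mu)$ on $H^1_{\et}(R',G_i/\mu)$ translates $\partial$ by the corresponding coboundary. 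Now $\mathrm{Br}(R')=\QQ/\ZZ$, so the Kummer sequence identifies $H^2_{\et}(R',Z(G_i))$ with a small explicit finite group; types $G_2,F_4,E_8$ have trivial centre, and for each of the types $B_n,C_n,D_n,E_6,E_7$ one exhibits classes in $H^1_{\et}(R',G^{ad}_i)$ realizing the required boundary classes by explicit $G^{ad}_i$-torsors over $R'$ --- Azumaya $R'$-algebras carrying a symplectic, orthogonal or unitary involution for the classical types, and the Tits algebras of suitable inner twists for $E_6,E_7$. Alternatively, surjectivity over $R'$ may be derived from the bijectivity of $\partial$ over the cohomological-dimension-$2$ field $K$ (a consequence of \cite{CTGP} and de Jong's period--index theorem for surfaces) by combining the injectivity $H^2_{\et}(R',\mu)\hookrightarrow H^2_{\et}(K,\mu)$, valid since $R'$ is regular, with the fact --- again a consequence of Step~1 and Theorem~\ref{thm:sc} --- that a class in $H^1(K,G)$ whose boundary is unramified along every height-one prime of $R'$ extends to $R'$. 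The surjectivity is where the genuine work lies; once it is available, Steps~2 and~3 together give the bijectivity of $\partial$, and the case $\mu=1$ recovers $H^1_{\et}(R,G)=1$ for simply connected $G$.
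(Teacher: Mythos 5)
Your Steps 1--2 reproduce, in essence, the paper's injectivity argument (isotropy over $K$ via \cite{CTGP}, transfer of isotropy to $R$ and triviality of the kernel via Theorem~\ref{thm:sc}, then the standard twisting description of the fibres of the boundary map). But there are two genuine gaps. First, type $E_8$: the hypotheses exclude only factors of type $A_n$, so $G$ may well have almost simple factors of type $E_8$, and for these the inputs you quote from \cite{CTGP} are simply not available --- Serre's conjecture II over the $2$-dimensional geometric field $K=k(x_1,x_2)$ is open for $E_8$, and so is the assertion that an anisotropic absolutely simple simply connected $K$-group must be of type $A$ once $E_8$ is allowed. The paper deals with this by observing that normal subgroups of type $E_8$ are direct factors (trivial centre) and by quoting the $E_8$ case of the conjecture from \cite[Theorem 2.7]{GiPi07}; without such a reduction your Step~1 claim and your use of $H^1(K,{}_\xi G^{sc})=1$ in Step~2 are unjustified for groups with $E_8$ factors.

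Second, surjectivity of the boundary map, which you yourself flag as ``where the genuine work lies'', is not actually proved in your Step~3. The explicit construction of torsors realizing all classes of $H^2_{\et}(R',\mu)$ is only sketched, and your alternative route rests on the claim that a class in $H^1(K,G)$ whose boundary is unramified along height-one primes extends to $R'$, which you attribute to Theorem~\ref{thm:sc}; that theorem only gives triviality of kernels (an injectivity statement for rationally trivial torsors) and provides no purity or extension statement, so this step has no justification. The paper avoids all of this: surjectivity is immediate from \cite[Theorem 3.17]{GiPi07}, which says that the boundary map already restricts to a bijection between the set of loop torsors $H^1_{loop}(R,G)$ and $H^2_{\et}(R,\mu)$; the new results of the paper are needed only for injectivity. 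So your proposal is on the right track for the injectivity half (modulo $E_8$), but as written the surjectivity half and the $E_8$ case are missing.
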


In~\cite{GiPi07} P. Gille and A. Pianzola established this conjecture for groups of types $G_2$, $F_4$ and $E_8$.
The case of groups of type $B_n$, $n\ge 2$, and of some groups of type $D_n$, follows from~\cite{Par-Laurent}. The groups of types $C_n$, $n\ge 6$,
and $D_n$, $n\ge 8$, were covered in~\cite{Steinmetz}. Our proof covers
all cases except for $E_8$, where we refer to~\cite{GiPi07}.

As another corollary, we remove the assumption of loop reductivity in a previous result of the author
concerning non-stable $K_1$-functors of isotropic reductive groups.
For any commutative ring $R$, once a reductive group $G$ over $R$ has isotropic rank $\ge 1$, then it contains
a pair of opposite strictly proper parabolic $R$-subgroups $P$ and $P^-$~\cite[Exp. XXVI]{SGA3}.
Under this assumption
one can consider the following "large"{} subgroup of $G(R)$ generated by unipotent elements,
$E_P(R)=\l<U_P(R),U_{P^-}(R)\r>$
where $U_P$ and $U_{P^-}$ are the unipotent radicals of $P$ and $P^-$.
The set of (left) cosets
$$
G(R)/E_P(R)=K_1^{G,P}(R)
$$
is called the non-stable $K_1$-functor associated to $G$ (and $P$), or
the Whitehead group of $G$. If $G$ has isotropic
rank $\ge 2$, that is, all semisimple quotients of $G$
contain $(\Gm_{,R})^2$, then $K_1^{G,P}(R)$ is independent of the choice of $P^\pm$~\cite{PS} and we denote it by $K_1^G(R)$.
The following result is a combination of~\cite[Theorem 1.2]{St-serr} and Theorem~\ref{thm:sc}. Its surjectivity part
follows from~\cite[Theorem 14.3]{ChGP-conj}.

\begin{cor}\label{cor:K1}
Let $k$ be a  field of characteristic $0$, and let $G$ be a reductive group of isotropic rank $\ge 2$ over
$R= k[x_1^{\pm 1},\ldots,x_n^{\pm 1}]$. Then the natural map
$$K_1^G(R)\to K_1^G\bigl( k((x_1))...((x_n)) \bigr)$$ is injective.  If $G$ is moreover semisimple, the map is an
isomorphism.
\end{cor}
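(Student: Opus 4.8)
The plan is to assemble two results already in the literature, the new ingredient being Theorem~\ref{thm:sc}, which supplies the hypothesis under which they apply. First I would note that since $G$ has isotropic rank $\ge 2$, a fortiori isotropic rank $\ge 1$, two things hold: the non-stable $K_1$-functor $K_1^G$ is well defined, i.e.\ independent of the chosen pair of opposite strictly proper parabolic subgroups, by~\cite{PS}; and, by Theorem~\ref{thm:sc}, the group $G$ is loop reductive, that is, it contains a maximal $R$-torus. This last point is the only place where the main theorem of the present paper is used.

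Next, for the injectivity of $K_1^G(R)\to K_1^G(F)$ with $F=k((x_1))\ldots((x_n))$, I would invoke~\cite[Theorem 1.2]{St-serr}, which establishes it for loop reductive groups of isotropic rank $\ge 2$ over $R=k[x_1^{\pm1},\ldots,x_n^{\pm1}]$ by means of the diagonal-argument trick together with the structure theory of loop reductive groups. Since loop reductivity is now automatic by the first step, the injectivity holds unconditionally.

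Finally, for surjectivity when $G$ is semisimple, I would fix a pair of opposite strictly proper parabolic $R$-subgroups $P$, $P^-$, so that $K_1^G=K_1^{G,P}$, and appeal to~\cite[Theorem 14.3]{ChGP-conj}, which yields $G(F)=G(R)\cdot E_P(F)$; hence every class in $K_1^G(F)$ is represented by an element of $G(R)$, the map is surjective, and together with the injectivity just shown it is an isomorphism. The only genuine content of the argument is the observation, via Theorem~\ref{thm:sc}, that over a Laurent polynomial ring isotropic rank $\ge 1$ already forces loop reductivity, so that the loop reductivity hypothesis in~\cite{St-serr} is superfluous; I expect no further obstacle beyond the routine task of matching the hypotheses of the cited theorems.
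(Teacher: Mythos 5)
Your argument is exactly the paper's: Theorem~\ref{thm:sc} upgrades isotropic rank $\ge 1$ to loop reductivity, which removes the only extra hypothesis in~\cite[Theorem 1.2]{St-serr} giving injectivity, and the surjectivity in the semisimple case is likewise attributed to~\cite[Theorem 14.3]{ChGP-conj}. No gaps; this matches the paper's proof of the corollary.
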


The author heartily thanks Vladimir Chernousov and Philippe Gille for
illuminating discussions and kind attention to the present work.

\section{Preliminaries on loop reductive groups}

Let $k$ be a field of characteristic $0$. We fix once and for
all an algebraic closure $\bar k$ of $k$ and a compatible set of primitive $m$-th roots of unity
$\xi_m\in \bar k$, $m\ge 1$.

P. Gille and A. Pianzola~\cite[Ch. 2, 2.3]{GiPi-mem} compute the \'etale
(or algebraic) fundamental group of the $k$-scheme
$$
X=\Spec k[x_1^{\pm 1},\ldots, x_n^{\pm 1}]
$$ at the
natural geometric point $e:\Spec\bar k\to X$ induced by the evaluation $x_1=x_2=\ldots=x_n=1$. Namely,
\begin{equation}\label{eq:pi1X}
\pi_1(X,e)=\hat\ZZ(1)^n\semil\Gal(\bar k/k),
\end{equation}
where $\hat\ZZ(1)$ denotes the profinite group $\varprojlim\limits_{m}\mu_m(\bar k)$ equipped with the
natural action of the absolute Galois group $\Gal(\bar k/k)$.

For any reductive group scheme $G$ over $X$, we denote by $G_0$ the split, or Chevalley---Demazure
reductive group in the sense of~\cite{SGA3} of the same type as $G$. The group $G$ is a twisted form of $G_0$, corresponding
to a cocycle class $\xi$ in the \'etale cohomology set $H^1_{\et}(X,\Aut(G_0))$.
The group $\pi_1(X,e)$ acts continuously on $\Aut(G_0)(\bar k)$ via the natural homomorphism
$\pi_1(X,e)\to \Gal(\bar k/k)$.

\begin{defn}\label{def:loop}\cite[Definition 3.4]{GiPi-mem}
The group scheme $G$ is called \emph{loop reductive}, if the cocycle $\xi$ is in the image of the natural map
$$
H^1\bigl(\pi_1(X,e),\Aut(G_0)(\bar k)\bigr)\to H^1_{\et}\bigl(X,\Aut(G_0)\bigr),
$$
where $H^1\bigl(\pi_1(X,e),\Aut(G_0)(\bar k)\bigr)$ denotes the non-abelian cohomology set
in the sense of Serre~\cite{Se}.
\end{defn}

This definition can be reformulated as follows.

\begin{thm*}\cite[Corollary 6.3]{GiPi-mem}
A reductive group scheme over $X$
is loop reductive if and only if $G$ has a maximal torus over $X$.
\end{thm*}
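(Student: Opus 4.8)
The plan is to phrase everything in terms of the cohomology class $\xi\in H^1_{\et}(X,\Aut(G_0))$ via Definition~\ref{def:loop}, and to use the standard torsor-theoretic translation. Fix a split maximal torus $T_0\subset G_0$ and put $N=N_{\Aut(G_0)}(T_0)$, a smooth $k$-group sitting in an extension $1\to T_0/Z(G_0)\to N\to W_0\rtimes\Out(G_0)\to1$ with finite constant quotient. Then $G={}_\xi G_0$ admits a maximal $X$-torus if and only if $\xi$ lies in the image of $H^1_{\et}(X,N)\to H^1_{\et}(X,\Aut(G_0))$: the $X$-scheme of maximal tori of $G$ is the $\xi$-twist of $\Aut(G_0)/N$, and given a reduction of $\xi$ to an $N$-valued cocycle the corresponding twist of $T_0$ is a maximal $X$-torus of $G$, since $N$ normalizes $T_0$ and $\rank T_0=\rank G$. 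Thus the theorem amounts to the equivalence: $\xi$ is represented by a continuous $\Aut(G_0)(\bar k)$-valued cocycle $\iff$ $\xi$ is represented by an $N$-valued cocycle — one implication being ``make a loop cocycle toral'', the other being ``straighten an $N$-valued cocycle coming from a maximal torus to constant coefficients''.

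\emph{From a loop cocycle to a maximal torus.} Let $z\colon\pi_1(X,e)\to\Aut(G_0)(\bar k)$ be a loop cocycle representing $\xi$. It is inflated from a finite quotient, and by~\eqref{eq:pi1X} its restriction to the normal subgroup $\hat\ZZ(1)^n$ — on which the coefficients carry trivial action — is a homomorphism with finite abelian image $A$, all of whose elements have finite order. In characteristic $0$ such elements are semisimple automorphisms, and a commuting family of semisimple automorphisms of a reductive group normalizes a common maximal torus (iterate: the identity component of the fixed subgroup stays reductive). Conjugating $z$ by a suitable element of $\Aut(G_0)(\bar k)$ — which keeps it a loop cocycle — we may therefore assume $A\subseteq N(\bar k)$. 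It remains to correct the Galois part: conjugating the $\hat\ZZ(1)^n$-cocycle relation through $\pi_1(X,e)$ yields $z(\gamma)\cdot{}^{\gamma}\!A\cdot z(\gamma)^{-1}=A$ for every $\gamma$, so each $z(\gamma)$ carries the finite set of maximal tori normalized by ${}^{\gamma}\!A$ onto those normalized by $A$; since the twisted Galois action factors through a finite group, a fixed-point argument lets us modify $z$ within its class so that $z(\pi_1(X,e))\subseteq N(\bar k)$. Then $\xi$ is represented by an $N$-valued cocycle, hence $G$ has a maximal $X$-torus.

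\emph{From a maximal torus to a loop cocycle.} Let $T\subset G$ be a maximal $X$-torus. Twisted forms of split tori over $X$ are classified by $H^1_{\et}(X,\GL_r(\ZZ))$ with constant coefficients and trivial $\pi_1$-action; a continuous homomorphism from the profinite group $\pi_1(X,e)$ to $\GL_r(\ZZ)$ has finite image, so $T$ is split by a connected finite étale Galois cover which, by~\eqref{eq:pi1X}, may be taken of the form $X'=\Spec k'[t_1^{\pm1},\ldots,t_n^{\pm1}]$ with $t_i^m=x_i$ and $k'/k$ finite Galois containing $\mu_m$ — and, enlarging it, so that the relevant twisted form of $T_0/Z(G_0)$ below also splits over $X'$. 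Since $\operatorname{Pic}(X')=0$ ($\mathcal O(X')$ being a localization of a polynomial ring over a field), the root subgroups of $(G_{X'},T_{X'})$ are trivial line bundles, so a pinning of $(G_{X'},T_{X'})$ identifies $G_{X'}$ with $G_0\times_k X'$ carrying $T_{X'}$ to the standard torus; as the descent data of $G$ preserve $T_{X'}$, they are transported to a cocycle $z\in Z^1(\Gamma,N(\mathcal O(X')))$, $\Gamma=\Gal(X'/X)$. Its image in $W_0\rtimes\Out(G_0)$ is a homomorphism into a finite constant group, hence constant; twisting by a constant lift, $z$ is corrected to a cocycle valued in a $\Gamma$-equivariant twisted form of $(T_0/Z(G_0))(\mathcal O(X'))$. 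For any split $X'$-torus $S$ one has $S(\mathcal O(X'))=S(k')\oplus(X_*(S)\otimes\ZZ^n)$ as $\Gamma$-modules, the lattice summand (spanned by the $t_i$) carrying trivial $\Gamma$-action because $\gamma$ rescales each $t_i$ by a root of unity already lying in $k'$; since $\Gamma$ is finite, $H^1(\Gamma,-)$ annihilates this $\Gamma$-trivial lattice, so $z$ is cohomologous to an $N(\bar k)$-valued cocycle. Inflating along $\pi_1(X,e)\twoheadrightarrow\Gamma$ exhibits $\xi$ as a loop class.

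\emph{The main difficulty.} The characteristic-$0$ hypothesis and the Laurent structure enter in two essential places: the classical fact that a finite abelian group of automorphisms normalizes a maximal torus, together with the straightening of the Galois part of a loop cocycle into $N$ (first implication); and the two facts that a reductive group with a split maximal torus over $X'$ is already split — because $\operatorname{Pic}(X')=0$ — and that $N$-valued descent cocycles over the Laurent cover $X'$ straighten to constant ones — because $\mathcal O(X')^\times=k'^\times\oplus\ZZ^n$ with the lattice part $\Gamma$-fixed (second implication). I expect the Galois-straightening step in the first implication, i.e.\ passing from ``$z$ toral on $\hat\ZZ(1)^n$'' to ``$z$ toral'', to be the technical heart; everything else is either classical structure theory of reductive groups or standard cohomological bookkeeping.
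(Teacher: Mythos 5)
First, note that the paper you were given does not prove this statement at all: it is imported verbatim from \cite[Corollary 6.3]{GiPi-mem} and used as a black box, so the only meaningful comparison is with Gille--Pianzola's own proof, whose two hard points your sketch identifies correctly but does not actually supply. In the direction ``loop cocycle $\Rightarrow$ maximal torus'', your straightening of the Galois part rests on the claim that the maximal tori normalized by $A=\mathrm{im}\,(z|_{\hat\ZZ(1)^n})$ form a \emph{finite} set; this is false in general (if the geometric part is trivial or central, \emph{every} maximal torus is normalized by $A$), and the subsequent ``fixed-point argument'' is not an argument. What is really needed is a $k$-rational, Galois-equivariant refinement of Borel--Mostow: writing $z=(z^{\mathrm{geo}},z^{\mathrm{ar}})$, the geometric part is a $\Gal(\bar k/k)$-equivariant homomorphism into $\Aut({}^{z^{\mathrm{ar}}}G_0)(\bar k)$, hence defines a finite \'etale $k$-subgroup of automorphisms of the twisted $k$-form, and one must produce a maximal torus of that $k$-group which is both defined over $k$ and stable under this subgroup. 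This rational equivariant statement (the ``variations on a theme of Borel and Mostow'' part of \cite{GiPi-mem}, which also uses Grothendieck's theorem on existence of maximal tori over fields) is the technical heart of the implication; even in the degenerate case $A=1$ your argument yields nothing, while the conclusion already requires Grothendieck's theorem.

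In the direction ``maximal torus $\Rightarrow$ loop'', the reduction to an $N(\mathcal O(X'))$-valued descent cocycle is fine, but the key cohomological step is wrong as stated. The sequence $1\to k'^{\times}\to\mathcal O(X')^{\times}\to\ZZ^n\to 1$ is \emph{not} $\Gamma$-equivariantly split: the monomial section $e_i\mapsto t_i$ fails equivariance precisely because $\gamma(t_i)=\zeta t_i$ with $\zeta\neq 1$ (you inverted the logic of that remark), so $S(\mathcal O(X'))\neq S(k')\oplus\bigl(X_*(S)\otimes\ZZ^n\bigr)$ as $\Gamma$-modules in general. Moreover the torus you must twist by carries the $\Gamma$-action through the image in $W_0\rtimes\Out(G_0)$, so the action on $X_*$ is typically nontrivial and $H^1(\Gamma,X_*\otimes\ZZ^n)$ need not vanish (already $H^1(\ZZ/2,\ZZ)$ with the sign action is $\ZZ/2$). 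Hence the annihilation step on which your whole second implication rests collapses. Indeed these ``lattice'' classes are not supposed to die: under inflation they are exactly the classes produced by loop cocycles with nontrivial geometric part (torsion points of the twisted torus, via Kummer theory over the cover), and the genuine proof shows they are realized by such cocycles --- possibly after enlarging $X'$ --- rather than that they vanish; there is also the unaddressed issue of whether the $W_0\rtimes\Out(G_0)$-valued quotient cocycle admits a constant cocycle lift at all. So both implications have real gaps at exactly the places where the cited memoir does serious work.
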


The definition of a maximal torus is as follows.

\begin{defn}\cite[Exp. XII D\'ef. 3.1]{SGA3}
Let $G$ be a group scheme of finite type over a scheme $S$, and let $T$ be a $S$-torus
which is an $S$-subgroup scheme of $G$.
Then $T$ is a \emph{maximal torus of $G$ over $S$}, if $T_{\overline{k(s)}}$ is a maximal torus of
$G_{\overline{k(s)}}$ for all $s\in S$.
\end{defn}

We say that $G$ has isotropic rank $\ge n$ if every normal semisimple reductive $R$-subgroup of $G$ contains
$(\Gm_{,R})^n$. This is equivalent to saying that all semisimple quotients of $G$ contain $(\Gm_{,R})^n$.

\section{Some corollaries of the Serre--Grothendieck conjecture for isotropic groups}

The following statement was obtained in~\cite{St-dh} as a joint corollary
of the corresponding statement for simply connected semisimple reductive groups~\cite[Theorem 1.6]{PaStV},
and of the result of I. Panin and R. Fedorov on the Serre--Grothendieck conjecture~\cite{PaF}.

\begin{thm*}~\cite[Theorem 4.2]{St-dh}
Assume that $R$ is a regular semilocal domain that contains an infinite field, and let $K$ be its fraction field. Let
$G$ be a reductive group scheme over $R$
of isotropic rank $\ge 1$. Then for any $n\ge 1$ the natural map
$$
H^1_{\et}(R[x_1,\ldots,x_n],G)\to H^1_{\et}(K[x_1,\ldots,x_n],G)
$$
has trivial kernel.
\end{thm*}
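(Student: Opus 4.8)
The plan is to prove the last displayed statement --- namely, that for a regular semilocal domain $R$ containing an infinite field, with fraction field $K$, and a reductive $R$-group scheme $G$ of isotropic rank $\ge 1$, the map $H^1_{\et}(R[x_1,\ldots,x_n],G)\to H^1_{\et}(K[x_1,\ldots,x_n],G)$ has trivial kernel --- by reducing to the two inputs cited in the excerpt and performing the reduction via central isogenies and a d\'evissage along the radical torus and the semisimple quotients.

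First I would handle the case where $G$ is semisimple simply connected: this is precisely the content of the quoted result \cite[Theorem 1.6]{PaStV} applied over $R[x_1,\ldots,x_n]$. Since $R[x_1,\ldots,x_n]$ is itself a regular domain with fraction field $K(x_1,\ldots,x_n)$, and more relevantly one has the affine-space Serre--Grothendieck statement of Fedorov--Panin \cite{PaF} available over the regular semilocal ring $R$, the simply connected case over $R[x_1,\ldots,x_n]$ follows by combining these two. Next I would pass from the simply connected group to a general reductive $G$. Write $G^{sc}\to G^{der}\hookrightarrow G$, with $G^{der}$ the derived subgroup (a semisimple $R$-group, again of isotropic rank $\ge 1$ since isotropic rank is inherited by $G^{der}$) and $G^{sc}$ its simply connected cover with central kernel $\mu$, a finite $R$-group of multiplicative type. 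The quotient $G/G^{der}=T$ is an $R$-torus, hence $H^1_{\et}(-,T)$ and $H^2_{\et}(-,\mu)$ are computed by abelian (flat) cohomology, which is unramified in the sense that $H^i_{\et}(R[x_1,\ldots,x_n],F)\to H^i_{\et}(K[x_1,\ldots,x_n],F)$ is injective for $F$ a torus or finite of multiplicative type over a regular ring --- this is classical (e.g.\ by purity / the fact that such sheaves satisfy the Gersten conjecture, or directly: $R[\underline x]$ is a UFD-like regular ring so $\mathrm{Pic}$, $\mathrm{Br}$ and $H^1(-,T)$ inject into the generic fibre). Chasing the two exact sequences of pointed cohomology sets associated with $1\to\mu\to G^{sc}\to G^{der}\to 1$ and $1\to G^{der}\to G\to T\to 1$, and using the twisting trick of Serre to convert ``trivial kernel'' statements into injectivity of fibres of the relevant maps after passing to inner forms (which are again reductive of isotropic rank $\ge 1$ over $R[x_1,\ldots,x_n]$ --- crucially, isotropic rank is preserved under inner twisting), one reduces the triviality of the kernel for $G$ to the triviality of the kernel for all inner forms of $G^{sc}$ together with the injectivity for $\mu$ and $T$.

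The step I expect to be the main obstacle is the twisting reduction in the semisimple-but-not-simply-connected case: when one twists $G^{sc}$ by a cocycle to analyze a nontrivial fibre of $H^1_{\et}(R[\underline x],G^{sc})\to H^1_{\et}(R[\underline x],G^{der})$, the twisted group ${}_{\xi}G^{sc}$ is again semisimple simply connected over $R[x_1,\ldots,x_n]$ but a priori only over that ring, not descended from $R$; one must check that \cite[Theorem 1.6]{PaStV} and the Fedorov--Panin input still apply in this generality, i.e.\ that they are stated for reductive group schemes over $R[x_1,\ldots,x_n]$ (resp.\ over the regular semilocal $R$) and not merely for groups defined over $R$. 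In fact \cite[Theorem 1.6]{PaStV} is formulated exactly for isotropic reductive group schemes over polynomial rings $R[x_1,\ldots,x_n]$ with $R$ regular semilocal, so this is fine, but one should also verify that the twisted form retains isotropic rank $\ge 1$; this holds because the cocycle lies in $H^1_{\et}(R[\underline x],G^{der})$ and inner twisting by $G^{der}\le G^{\mathrm{ad}}$ does not change the isomorphism class of the radical of any Levi, hence preserves the existence of a proper parabolic with Levi containing $\Gm$. Once these points are secured, the diagram chase is routine: the kernel of the map for $G$ is squeezed between the (trivial) kernel for the torus $T$ on one side and the (trivial, by the simply connected case plus injectivity for $\mu$) kernel for $G^{der}$ on the other, yielding the claim.
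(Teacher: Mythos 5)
Your overall strategy (simply connected case from \cite{PaStV} plus Fedorov--Panin \cite{PaF}, then a d\'evissage to general reductive $G$) is the strategy that the present paper attributes to \cite{St-dh}; note the paper itself does not reprove the statement, it only cites \cite[Theorem 4.2]{St-dh}. However, your execution of the d\'evissage has a genuine gap, and it sits exactly at the step you yourself flagged. First, the claim that ``isotropic rank is preserved under inner twisting'' is false: twisting by a cocycle with values in $G^{der}$ (acting through $G^{\ad}$) does not preserve parabolic subgroups unless the cocycle reduces to the normalizer of one; over a field already, the split groups $\SL_n$ and $\PGL_n$ have anisotropic inner forms $\SL_1(D)$, $\PGL_1(D)$. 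Your justification via ``the radical of a Levi'' has no content, and in fact the whole point of Theorem 1.2(1) of this paper is that deducing isotropy over the ring from generic isotropy is a nontrivial theorem (here over Laurent polynomial rings), not something one gets for free for a twisted form over $R[x_1,\ldots,x_n]$. Second, the twisted group ${}_{\xi}G^{sc}$ is a group scheme over $R[x_1,\ldots,x_n]$ that need not be extended from $R$; the input \cite[Theorem 1.6]{PaStV}, as it is used here, is a statement about a reductive group scheme defined over the regular semilocal base $R$ with constant coefficients along $\Aff^n$, and \cite{PaF} is about regular local rings, so even granting isotropy you could not quote either result for such a twisted form. Your parenthetical assertion that \cite[Theorem 1.6]{PaStV} is formulated for arbitrary isotropic groups over $R[x_1,\ldots,x_n]$ is not a safe assumption, and without it the reduction collapses.

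Two smaller points. The blanket claim that $H^1(-,T)$ and $H^2(-,\mu)$ inject into the generic fibre ``for a regular ring'' is false as stated (already $H^1(A,\Gm)=\mathrm{Pic}(A)$ for a Dedekind domain with nontrivial class group); in the situation at hand it does hold, but via homotopy invariance for multiplicative-type coefficients together with purity/Grothendieck--Serre over the \emph{semilocal} regular ring $R$, and that chain of references must be supplied. Finally, the standard ways to make this kind of d\'evissage work avoid precisely the non-central twisting you invoke: one either uses flasque or $z$-extension-type resolutions, where the twisting is by \emph{central} cocycles and therefore does not change the group at all, or one bypasses twisting by first using \cite{PaF} to get Zariski-local triviality of a class in the kernel and then an extension/patching argument over $\Aff^n_R$ (in the spirit of Lemma 3.2 of this paper) before evaluating at $0$ and applying \cite{PaF} over $R$. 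As written, your proof is not complete.
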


\begin{lem}\label{lem:regloc-inj}
Let $G$ be a reductive group of isotropic rank $\ge 1$ over a regular local ring $A$ containing an infinite field
$k$. Let $f(x)\in A[x]$ be a non-zero polynomial.
Then $H^1_{\et}(\Aff^1_A,G)\to H^1_{\et}((\Aff^1_A)_f,G)$
has trivial kernel.
\end{lem}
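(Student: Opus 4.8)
The plan is to reduce the statement on the punctured affine line $(\Aff^1_A)_f$ to the ``unpunctured'' case provided by the cited theorem of~\cite{St-dh}, applied with a suitable regular semilocal base. First I would localize: the closed subscheme $V(f)\subset \Aff^1_A$ is finite over $\Spec A$ (after possibly shrinking, using that $f$ is a nonzero polynomial and $A$ is a domain with fraction field $K$, the $K$-algebra $K[x]/(f)$ is artinian, so $V(f)$ has only finitely many points lying over the closed point of $\Spec A$ together with the generic points of the fibers). Let $B$ be the semilocalization of $\Aff^1_A$ at this finite set of points. Then $B$ is a regular semilocal domain containing the infinite field $k$, and its fraction field is $K(x)$, the fraction field of $A[x]$; moreover $\Spec B$ contains $\Spec$ of the local ring at every point of $V(f)$, so a class in $H^1_{\et}(\Aff^1_A,G)$ that becomes trivial on $(\Aff^1_A)_f$ certainly becomes trivial on $\Spec B$ as well — actually one needs a gluing argument here rather than a direct implication, which I discuss below.

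The key steps, in order, would be: (1) produce the class $\eta\in H^1_{\et}(\Aff^1_A,G)$ with trivial restriction to $(\Aff^1_A)_f$; (2) observe that $G$ pulls back to a reductive group of isotropic rank $\ge 1$ over $A[x]$, hence over its localization $B$, and that we may apply~\cite[Theorem 4.2]{St-dh} with ``$R$'' $=B$, ``$K$'' $=K(x)$, and $n=0$ to conclude that $H^1_{\et}(B,G)\to H^1_{\et}(K(x),G)$ has trivial kernel; (3) show that $\eta$ restricted to $B$ is trivial, by noting that over $K(x)$ it is trivial (since $(\Aff^1_A)_f$ contains $\Spec K(x)$) and invoking step (2); (4) patch: $\eta$ is now trivial on both the open $(\Aff^1_A)_f$ and on $\Spec B$, and these two cover a neighborhood of each point of $\Aff^1_A$, so by a Mayer--Vietoris / Beauville--Laszlo type glueing for torsors under smooth affine group schemes — or, more elementarily, by noting that $\eta$ is trivial at every local ring of $\Aff^1_A$ and using that $\Aff^1_A$ is one-dimensional regular — conclude $\eta=1$. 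In fact the cleanest route to step (4) is: a $G$-torsor over $\Aff^1_A$ that is trivial over $(\Aff^1_A)_f$ is classified, via the localization sequence for the semilocal ring $B$ at the points of $V(f)$, by its class in $H^1_{\et}(B,G)$ modulo the image of $H^1_{\et}(\Aff^1_A,G)\to\prod H^1$ over the punctured local rings; since that class is trivial by step (3), the torsor extends to the trivial torsor, i.e. $\eta=1$.

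The main obstacle I expect is precisely step (4), the passage from ``trivial on the punctured line and on the semilocal ring along $V(f)$'' to ``trivial on all of $\Aff^1_A$'': one must make sure the relevant glueing for non-abelian $H^1$ of a reductive (smooth affine) group scheme over a $1$-dimensional regular base is legitimate. This is standard — it is the Beauville--Laszlo theorem combined with the observation that over a regular ring of dimension $\le 1$ a torsor is trivial iff it is trivial at all points — but it has to be invoked with the correct hypotheses (noetherian, the group scheme smooth affine, $f$ a non-zero-divisor). A secondary, more bookkeeping point is verifying that $B$ is genuinely semilocal and regular and has the right fraction field; this is routine since $A$ is regular local and $A[x]$ is regular, so any localization is regular, and a finite set of primes gives a semilocal ring. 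With these in hand the lemma follows by assembling steps (1)--(4).
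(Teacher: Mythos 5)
There is a genuine gap, and it sits exactly where you flag it: step (4). Your reduction would need the implication ``trivial on $(\Aff^1_A)_f$ and on a semilocal ring along $V(f)$ $\Rightarrow$ trivial on $\Aff^1_A$'', but $\Aff^1_A$ has dimension $\dim A+1$, not $1$ ($A$ is an arbitrary regular \emph{local} ring, not a field or a Dedekind ring), so the ``one-dimensional regular'' shortcut is unavailable; and Beauville--Laszlo/formal glueing only describes a torsor on $\Aff^1_A$ by a glueing cocycle over the punctured neighbourhood of $V(f)$ --- triviality of the two restrictions does not kill the double-coset class of that cocycle, so it does not give $\eta=1$. In fact a non-abelian local--global statement of this kind over $\Aff^1_A$ is essentially as hard as the lemma itself: in this paper it is exactly the content of the next lemma (Quillen patching via~\cite{AHW}), which is proved only for \emph{monic} $f$ and, moreover, uses the present lemma as an input to trivialize $\xi$ over $\Aff^1_{A_m}$. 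There is also a problem already in step (2): $V(f)$ need not be finite over $\Spec A$, and when all coefficients of $f$ lie in the maximal ideal, $V(f)$ contains the whole closed fibre $\Aff^1_{k(\mathfrak m)}$, which has infinitely many closed points; your parenthetical argument via $K[x]/(f)$ being artinian only controls the generic fibre, so ``the semilocal ring $B$ at the points of $V(f)$'' is not defined, and triviality on $(\Aff^1_A)_f\cup\Spec B$ would not even give triviality at every local ring of $\Aff^1_A$.

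The paper's proof avoids patching altogether by exploiting that the theorem you cite already handles a polynomial variable over the base: since $f\neq 0$ and $A[x]$ is a domain, the generic point $\Spec K(x)$ lies in $(\Aff^1_A)_f$, so a class in the kernel dies over $K(x)$; by~\cite[Proposition 2.2]{CTO} the map $H^1_{\et}(K[x],G)\to H^1_{\et}(K(x),G)$ has trivial kernel, so the class dies over $K[x]$; and then~\cite[Theorem 4.2]{St-dh}, applied to the regular local ring $A$ with the single variable $x$ (rather than, as in your step (2), with $n=0$ over a semilocalization inside $K(x)$), shows that $H^1_{\et}(A[x],G)\to H^1_{\et}(K[x],G)$ has trivial kernel, whence the class is trivial on $\Aff^1_A$. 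If you want to salvage your outline, the missing move is precisely this: descend from the generic point to $K[x]$ by Colliot-Th\'el\`ene--Ojanguren and then to $A[x]$ by the relative Grothendieck--Serre statement, instead of trying to glue along $V(f)$.
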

\begin{proof}
Let $K$ be the fraction field of $A$. By~\cite[Theorem 4.2]{St-dh} the map $H^1_{\et}(A[x],G)\to H^1_{\et}(K[x],G)$
has trivial kernel. By~\cite[Proposition 2.2]{CTO} the map $H^1_{\et}(K[x],G)\to H^1_{\et}(K(x),G)$ has trivial kernel.
Hence the claim.
\end{proof}

The following lemma is based on a classical trick of Quillen~\cite{Q}.

\begin{lem}\label{lem:quillen}
Let $G$ be a reductive group of isotropic rank $\ge 1$ over a regular ring $A$ containing
an infinite field $k$.
Let $f(x)\in A[x]$ be a monic polynomial.
Then $H^1_{\et}(\Aff^1_A,G)\to H^1_{\et}((\Aff^1_A)_{f},G)$
has trivial kernel.
\end{lem}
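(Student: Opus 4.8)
The plan is to deduce this from Lemma~\ref{lem:regloc-inj} by the standard Quillen patching argument, localizing at the (finitely many) maximal ideals of $A$ and exploiting that $f$ is monic. So let me sketch how I would set this up.

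First I would reduce to the local case. Suppose $\xi\in H^1_{\et}(\Aff^1_A,G)$ becomes trivial over $(\Aff^1_A)_f=\Spec A[x]_f$. I want to show $\xi$ is trivial. The obstruction to triviality of $\xi$ is measured on the scheme $\Spec A[x]$, and since $G$ is of isotropic rank $\ge 1$ over $A$ (hence over $A[x]$), torsors under $G$ that are locally trivial in the Zariski topology of $\Spec A[x]$ are in fact trivial once $A[x]$ is "nice enough"; but more to the point, I would use the classical patching/descent principle of Quillen: a torsor on $\Aff^1_A$ that becomes trivial after inverting a \emph{monic} polynomial $f$, and that is "extended" in a suitable sense, is trivial. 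Concretely, for each maximal ideal $\mathfrak m\subset A$ with local ring $A_{\mathfrak m}$, Lemma~\ref{lem:regloc-inj} applies: since $f$ is monic it is in particular a nonzero polynomial in $A_{\mathfrak m}[x]$, so the map $H^1_{\et}(\Aff^1_{A_{\mathfrak m}},G)\to H^1_{\et}((\Aff^1_{A_{\mathfrak m}})_f,G)$ has trivial kernel, and therefore the image $\xi_{\mathfrak m}$ of $\xi$ in $H^1_{\et}(\Aff^1_{A_{\mathfrak m}},G)$ is trivial.

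The next step is to globalize this pointwise-local triviality over $\Spec A$. Because $A$ is regular and $\xi$ is trivial after inverting a monic polynomial, Quillen's local-global principle (in the form: a $G$-torsor $P$ over $\Aff^1_A$ such that $P_{A_{\mathfrak m}[x]}$ is extended from $A_{\mathfrak m}$ for every maximal ideal $\mathfrak m$ is itself extended from $\Spec A$) lets me conclude that $\xi$ is extended from a class $\xi_0\in H^1_{\et}(\Spec A,G)$ — here I use that "trivial over $A_{\mathfrak m}[x]$" in particular means "extended from $A_{\mathfrak m}$", namely extended from the trivial class. Then restricting along the zero section $A\hookrightarrow A[x]$, or along $x\mapsto 0$, shows $\xi_0$ must be the image of $\xi$ under this section composed with the projection, which forces $\xi_0$ to be trivial (it is both the restriction of the extended class and equals a specialization of $\xi$, which is trivial locally everywhere, hence trivial globally by the isotropy hypothesis on $A$ — or directly, $\xi$ restricted to the section is trivial since $\xi$ itself is, being in the kernel we started with is not needed here; rather $\xi_0$ glues the local trivializations). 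Hence $\xi=\mathrm{pr}^*\xi_0$ is trivial.

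The main obstacle, and the point I would need to be careful about, is justifying the Quillen patching step for non-abelian $H^1$ of a possibly non-constant reductive group scheme $G$ over $A[x]$: one needs a form of the Quillen–Suslin local-global principle valid for $G$-torsors, asserting that a torsor over $\Aff^1_A$ which is "locally extended" (extended from $A_{\mathfrak m}$ for all maximal $\mathfrak m$) is "globally extended" (from $A$). This is available in the literature on principal bundles over polynomial rings for reductive group schemes, and the monicity of $f$ is exactly what allows one to pass from the localization-by-$f$ hypothesis to the "locally extended" hypothesis (inverting a monic polynomial becomes invertible, in the relevant completions/localizations, precisely because a monic polynomial is a unit after the appropriate localization of $A[x]$ near the closed fiber minus its zero locus, which does not meet $x=\infty$). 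Once that patching statement is in hand, the proof is the routine reduction above; if it is not citable in exactly the needed generality, one would instead argue via a Noetherian induction on $\dim A$ together with Lemma~\ref{lem:regloc-inj} applied at the generic points of the closed subschemes that appear, mimicking Quillen's original induction.
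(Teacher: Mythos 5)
Your first two steps match the paper's proof exactly: localize at each maximal ideal $\mathfrak m$ of $A$, apply Lemma~\ref{lem:regloc-inj} (monicity guaranteeing $f\neq 0$ in $A_{\mathfrak m}[x]$), and then invoke a non-abelian Quillen patching theorem (the paper uses $A$-linearity of $G$ via Thomason together with \cite[Theorem 3.2.5]{AHW}) to conclude that $\xi$ is \emph{extended} from a class $\xi_0\in H^1_{\et}(A,G)$. The genuine gap is in your final step: being extended is not the end of the argument, because nothing you have said shows that $\xi_0$ itself is trivial. Your two justifications both fail. First, ``$\xi_0$ is trivial locally at every maximal ideal, hence trivial globally'' is exactly the kind of Zariski-local-to-global statement that is false for a general regular ring $A$ (already for $G=\GL_2$ over a Dedekind domain with nontrivial class group), and indeed its validity over Laurent polynomial rings is the nontrivial content of Conjecture~\ref{conj:1}, not an available hypothesis. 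Second, ``$\xi$ restricted to the zero section is trivial since $\xi$ itself is'' is circular: triviality of $\xi$ is what is being proved, and the hypothesis only gives triviality over $(\Aff^1_A)_f$, through which the section $x=0$ does not factor unless $f(0)\in A^\times$. In fact, with $k$ infinite one still cannot in general find any section $x=a$ with $f(a)\in A^\times$ (take $A=k[t]$, $f=x-t$), so no specialization argument inside $\Aff^1_A$ can work.

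What is missing is precisely the point where monicity does its real work: the behaviour at infinity. The paper's proof continues by setting $y=x^{-1}$ and $g(y)=x^{-\deg f}f(x)$, so that $g(0)$ is the leading coefficient of $f$, a unit; then $A[x]_{xf}=A[y]_{yg}$, and since $\xi$ is trivial there, one can glue $\xi$ with the trivial torsor on $\Spec(A[y]_g)$ to a torsor $\hat\xi$ on $\Pro^1_A$. Restricting $\hat\xi$ to the chart $\Spec(A[y])$ gives a class $\eta$, trivial on $\Spec(A[y]_g)$, and the same localize-and-patch argument shows $\eta$ is extended from $A$; but now the section $y=0$ \emph{does} lie in the trivializing locus because $g(0)\in A^\times$, so $\eta$ is trivial, and evaluating at $x=y=1$ shows the class $\xi_0$ from which $\xi$ is extended is trivial, hence $\xi$ is trivial. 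Your closing remark about ``$x=\infty$'' gestures toward this, but the Horrocks-style gluing over $\Pro^1_A$ and the second application of the patching argument on the chart at infinity are the essential missing steps, and without them the proof does not go through.
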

\begin{proof}
Let $\xi\in H^1_{\et}(\Aff^1_A,G)$ be in the kernel.
Since $f$ is monic, for any maximal ideal $m$ of $A$ the
image of $f$ in $A_m[x]$ is non-zero. Then by Lemma~\ref{lem:regloc-inj} the
$G$-bundle $\xi|_{\Aff^1_{A_m}}$ is trivial. Since $A$ is regular, $G$ is $A$-linear by~\cite[Corollary 3.2]{Thomason}.
Then by~\cite[Theorem 3.2.5]{AHW} (see also~\cite[Korollar 3.5.2]{Mo})
the fact that for any maximal ideal $m$ of $A$ the $G$-bundle $\xi|_{\Aff^1_{A_m}}$ is trivial implies
that  $\xi$ is extended from $A$.

Set $y=x^{-1}$ and choose $g(y)\in A[y]$ so that $x^{\deg(f)}g(y)=f(x)$. Then $g(0)\in A^\times$ and  $A[x]_{xf}=A[y]_{yg}$.
We have $\Pro^1_A=\Aff^1_A\cup\Spec(A[y]_g)$, and $\Aff^1_A\cap\Spec(A[y]_g)=(\Aff^1_A)_{xf}$.
Hence we can extend $\xi$ to a bundle $\hat\xi$ on $\Pro^1_A$ by gluing it to a trivial bundle on $\Spec(A[y]_g)$.
Let $\eta=\hat\xi|_{\Spec(A[y])}$. By assumption, $\eta$ is trivial on $\Spec(A[y]_g)$. Since $g(0)\in A^\times$,
by the same argument as above
$\eta$  is extended. However, $g(0)$ is invertible and $\eta$ is trivial at $y=0$, hence $\eta$ is trivial.
Hence $\xi$ is trivial at $x=y=1$.
Hence $\xi$ is trivial.
\end{proof}

\begin{lem}\label{lem:x-quillen}
Let $G$ be a reductive group of isotropic rank $\ge 1$ over a regular ring $A$ containing
an infinite field $k$.
Let $f(x)\in A[x]$ be a monic polynomial such that $f(0)\in A^\times$.
Then $H^1_{\et}((\Aff^1_A)_x,G)\to H^1_{\et}((\Aff^1_A)_{xf},G)$
has trivial kernel.
\end{lem}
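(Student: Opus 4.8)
The plan is to reduce the statement to Lemma~\ref{lem:quillen} by completing the punctured line $(\Aff^1_A)_x$ to the full affine line $\Aff^1_A$ at its missing point, arranged so that $(\Aff^1_A)_{xf}$ becomes the overlap of the two charts. Concretely I would put $y = x^{-1}$, so that $A[x,x^{-1}] = A[y,y^{-1}]$ and $(\Aff^1_A)_x = \Spec A[y,y^{-1}]$ is the complement of the divisor $\{y=0\}$ in $\Aff^1_A = \Spec A[y]$. Writing $f(x) = \sum_{i=0}^d c_i x^i$ with $c_d = 1$ and $c_0 = f(0)\in A^\times$, set $g(y) = y^d f(1/y) = \sum_{j=0}^d c_{d-j}y^j \in A[y]$. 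Then $g(0) = c_d = 1$, hence $g \equiv 1 \pmod{y}$ and $(y,g) = A[y]$; equivalently
$$\Aff^1_A = (\Aff^1_A)_x \cup \Spec A[y]_g,$$
with intersection $\Spec A[y,y^{-1},g^{-1}]$. In $A[x,x^{-1}]$ one has $g = x^{-d}f$ with $x$ a unit, so this intersection is precisely $(\Aff^1_A)_{xf}$; and the leading coefficient of $g$ being $c_0 = f(0)\in A^\times$, the monic polynomial $h := f(0)^{-1}g$ generates the same ideal of $A[y]$ as $g$, so $\Spec A[y]_g = (\Aff^1_A)_h$.

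Given $\xi \in H^1_{\et}((\Aff^1_A)_x,G)$ in the kernel, its restriction to $(\Aff^1_A)_{xf}$ is trivial; fixing a trivialization and gluing $\xi$ to the trivial $G$-torsor over $\Spec A[y]_g$ along $(\Aff^1_A)_{xf}$ — legitimate by \'etale descent for $G$-torsors, with no cocycle condition to check on a two-element cover — I obtain a $G$-torsor $\tilde\xi$ over $\Aff^1_A = \Spec A[y]$ restricting to $\xi$ on $(\Aff^1_A)_x$ and to a trivial torsor on $(\Aff^1_A)_h$. Since $h$ is monic, Lemma~\ref{lem:quillen} applied to $\tilde\xi$ shows $\tilde\xi$ is trivial over $\Aff^1_A$, and restricting back to $(\Aff^1_A)_x$ gives that $\xi$ is trivial.

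I do not expect a real obstacle here: all of the homotopy- and patching-theoretic substance is already packaged into Lemma~\ref{lem:quillen} (whose proof runs the $\Pro^1$-gluing plus Quillen-patching argument), and what remains is the bookkeeping of the two chart identities above, both of which come down to $g(0)$ being the leading coefficient, hence $1$, of the monic polynomial $f$. The hypotheses needed for Lemma~\ref{lem:quillen} — namely $A$ regular containing an infinite field and $G$ of isotropic rank $\ge 1$ — are exactly those in force here, so no extra input is required.
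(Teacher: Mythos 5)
Your proof is correct and follows essentially the same route as the paper: glue the torsor in the kernel to a trivial torsor on a complementary chart so that it extends to the affine line, then quote Lemma~\ref{lem:quillen}. The only difference is cosmetic: the paper stays in the coordinate $x$, covering $\Aff^1_A$ by $(\Aff^1_A)_x$ and $(\Aff^1_A)_f$ (using $f(0)\in A^\times$) and applying Lemma~\ref{lem:quillen} to the monic polynomial $xf$, whereas you pass to $y=x^{-1}$ and use the reversed polynomial $g$, so monicity of $f$ and invertibility of $f(0)$ simply trade roles.
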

\begin{proof}
Since $f(0)\in A^\times$, any
$G$-bundle in the kernel can be extended to $\Aff^1_A$ by gluing it to a trivial $G$-bundle on $(\Aff^1_A)_f$.
Then it is trivial by Lemma~\ref{lem:quillen} applied to $xf$.
\end{proof}

\begin{lem}\label{lem:const-inj}
Under the assumptions of Lemma~\ref{lem:quillen} for any $n\ge 0$
the natural map
$$
H^1_{\et}\bigl(A[t_1^{\pm 1},\ldots,t_n^{\pm 1}],G\bigr)\to H^1_{\et}\bigl(A\otimes_k k(t_1,\ldots,t_n),G\bigr)
$$
has trivial kernel.
\end{lem}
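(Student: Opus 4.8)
I would argue by induction on $n$, at each step peeling off one variable and reducing to a one-variable statement that follows from Lemma~\ref{lem:x-quillen}, using throughout that $H^1_{\et}(-,G)$ commutes with filtered colimits of rings (the group scheme $G$ being affine of finite presentation).

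\emph{Step 1 (the one-variable case over an arbitrary regular base).} I first prove: for every regular ring $B$ containing an infinite field $k$ and every reductive $B$-group $G$ of isotropic rank $\ge 1$, the natural map
$$
H^1_{\et}\bigl(B[t^{\pm 1}],G\bigr)\to H^1_{\et}\bigl(B\otimes_k k(t),G\bigr)
$$
has trivial kernel. Indeed $k(t)=\varinjlim_f k[t]_f$ over the nonzero $f\in k[t]$ ordered by divisibility, so $B\otimes_k k(t)=\varinjlim_f(\Aff^1_B)_f$ and hence $H^1_{\et}(B\otimes_k k(t),G)=\varinjlim_f H^1_{\et}((\Aff^1_B)_f,G)$. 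Thus a class $\xi$ in the kernel becomes trivial over $(\Aff^1_B)_f$ for some nonzero $f\in k[t]$ which, enlarging $f$ to $tf$ if necessary, we may take divisible by $t$; writing $f=t^m h$ with $m\ge 1$ and $h(0)\ne 0$, and rescaling $h$ by its leading coefficient (a unit of $k^\times\subseteq B^\times$), we may assume $h$ is monic with $h(0)\in B^\times$. Since $(\Aff^1_B)_f=(\Aff^1_B)_{th}$, Lemma~\ref{lem:x-quillen} applied over $B$ to the monic polynomial $h$ forces $\xi$ to be trivial.

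\emph{Step 2 (the induction).} The case $n=0$ is vacuous. Assuming the statement for $n-1$ variables over every regular ring containing an infinite field, set $B=A[t_1^{\pm 1},\ldots,t_{n-1}^{\pm 1}]$, which is again regular and contains $k$. Applying Step~1 over $B$ in the variable $t_n$ shows that
$$
H^1_{\et}\bigl(A[t_1^{\pm 1},\ldots,t_n^{\pm 1}],G\bigr)\to H^1_{\et}\bigl(B\otimes_k k(t_n),G\bigr)
$$
has trivial kernel. Now put $k_1=k(t_n)$ and $A_1=A\otimes_k k(t_n)$; then $A_1$ is a regular ring containing the infinite field $k_1$, one has $B\otimes_k k(t_n)=A_1[t_1^{\pm 1},\ldots,t_{n-1}^{\pm 1}]$, and $A_1\otimes_{k_1}k_1(t_1,\ldots,t_{n-1})=A\otimes_k k(t_1,\ldots,t_n)$. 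The induction hypothesis applied to $G$ over $A_1$ in the $n-1$ variables $t_1,\ldots,t_{n-1}$ shows that
$$
H^1_{\et}\bigl(A_1[t_1^{\pm 1},\ldots,t_{n-1}^{\pm 1}],G\bigr)\to H^1_{\et}\bigl(A\otimes_k k(t_1,\ldots,t_n),G\bigr)
$$
has trivial kernel. Since a composite of maps of pointed sets each with trivial kernel again has trivial kernel, and the composite of the two ring homomorphisms above is the canonical homomorphism $A[t_1^{\pm 1},\ldots,t_n^{\pm 1}]\to A\otimes_k k(t_1,\ldots,t_n)$, the induction is complete. Here one uses that base change along $A\to B$ and $A\to A_1$ preserves both regularity and the property of being reductive of isotropic rank $\ge 1$.

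\emph{Anticipated difficulty.} I expect no serious obstacle: the one genuinely computational point is the reduction in Step~1 that brings the denominator $f$ into the monic, unit-constant-term form required by Lemma~\ref{lem:x-quillen}, and the only thing to watch in Step~2 is that the intermediate ring $B\otimes_k k(t_n)$ really does play both roles --- a localization of $A[t_1^{\pm1},\ldots,t_n^{\pm1}]$ receiving $\xi$, and a Laurent polynomial ring in $n-1$ variables over a regular ring containing an infinite field to which the induction hypothesis applies verbatim.
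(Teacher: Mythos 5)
Your proof is correct and follows essentially the same route as the paper: induction on the number of variables, with the one-variable step handled by writing the function field as a filtered colimit of localizations at monic polynomials with unit constant term and invoking Lemma~\ref{lem:x-quillen}, using that $H^1_{\et}(-,G)$ commutes with filtered colimits. The only (immaterial) difference is the order of the two steps --- you peel off $t_n$ first and then apply the inductive hypothesis over $A\otimes_k k(t_n)$, whereas the paper applies the inductive hypothesis over $A[t_n^{\pm 1}]$ first and treats the last variable at the end.
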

\begin{proof}
We prove the claim by induction on $n$; the case $n=0$ is trivial.
Set $l=k(t_1,\ldots,t_{n-1})$. By the inductive hypothesis, the map
$$
H^1_{\et}\bigl(A[t_1^{\pm 1},\ldots,t_n^{\pm 1}],G\bigr)\to H^1_{\et}\bigl(A[t_n^{\pm 1}]\otimes_k l,G\bigr)=
H^1_{\et}\bigl(A\otimes_k l[t_n^{\pm 1}],G\bigr)
$$
has trivial kernel, so it remains to prove the triviality of the kernel for the map
$$
H^1_{\et}\bigl(A\otimes_k l[t_n^{\pm 1}],G\bigr)\to H^1_{\et}\bigl(A\otimes_k l(t_n),G\bigr).
$$
We have $l(t_n)=\varinjlim\limits_{g} l[t_n]_{t_ng}$, where $g\in l[t_n]$ runs over all monic polynomials
with $g(0)\in l^\times$. Since $H^1_{\et}(-,G)$ commutes with filtered direct limits, it remains to show that every map
\begin{equation}\label{eq:coinj}
H^1_{\et}(A\otimes_k l[t_n^{\pm 1}],G)\to H^1_{\et}(A\otimes_k l[t_n]_{t_ng},G)
\end{equation}
has trivial kernel. This is the claim of Lemma~\ref{lem:x-quillen}.
\end{proof}

\begin{lem}\label{lem:zwt}
Let $k$ be an infinite field, $A$ be a regular ring containing $k$, and let $G$ be
a reductive group of isotropic rank $\ge 1$ over
$A[z_1^{\pm 1},\ldots,z_n^{\pm 1}]$.
For any set of integers $d_i>0$, $1\le i\le n$, the map
$$
\psi:H^1_{\et}\bigl(A[z_1^{\pm 1},\ldots,z_n^{\pm 1},t_1,\ldots,t_n],G\bigr)\xrightarrow{z_i\mapsto w_it_i^{d_i}}
H^1_{\et}\bigl(A\otimes _k k(w_1,\ldots,w_n)[t_1^{\pm 1},\ldots,t_n^{\pm 1}],\psi^*(G)\bigr)
$$
has trivial kernel.
\end{lem}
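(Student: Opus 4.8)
The plan is to reduce the statement to Lemma~\ref{lem:const-inj} by unwinding the ring map $z_i\mapsto w_it_i^{d_i}$. Write $B=A[z_1^{\pm 1},\ldots,z_n^{\pm 1}]$, so that $G$ is a reductive group over $B$ of isotropic rank $\ge 1$, and the source of $\psi$ is $H^1_{\et}(B[t_1,\ldots,t_n],G)$. The substitution $z_i\mapsto w_it_i^{d_i}$ expresses $B[t_1,\ldots,t_n]$ as mapping into $A[w_1^{\pm 1},\ldots,w_n^{\pm 1}][t_1^{\pm 1},\ldots,t_n^{\pm 1}]$; indeed sending $z_i$ to $w_it_i^{d_i}$ and inverting the $t_i$ gives a well-defined $A$-algebra homomorphism under which $z_i$ becomes a unit. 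So I first factor $\psi$ through the "partial" base change
$$
H^1_{\et}(B[t_1,\ldots,t_n],G)\to H^1_{\et}(A[w_1^{\pm 1},\ldots,w_n^{\pm 1}][t_1^{\pm 1},\ldots,t_n^{\pm 1}],\psi_1^*G)\to H^1_{\et}(A\otimes_k k(w_1,\ldots,w_n)[t_1^{\pm 1},\ldots,t_n^{\pm 1}],\psi^*G),
$$
where $\psi_1$ is the ring map with $z_i\mapsto w_it_i^{d_i}$ and $A'=A[w_1^{\pm 1},\ldots,w_n^{\pm 1}]$ is again a regular ring containing $k$. The second arrow here has trivial kernel by Lemma~\ref{lem:const-inj} applied with $A$ replaced by $A'[t_1^{\pm 1},\ldots,t_n^{\pm 1}]$ (still regular, still containing $k$) and the group $\psi_1^*G$, which has isotropic rank $\ge 1$ since that property is preserved under base change. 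Hence it suffices to prove that the first arrow has trivial kernel.

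For the first arrow I would exhibit $A'[t_1^{\pm 1},\ldots,t_n^{\pm 1}]$ as a localization of $B[t_1,\ldots,t_n]$, composed possibly with a further flat base change, so that triviality of a torsor upstairs forces triviality downstairs. Concretely: the map $z_i\mapsto w_it_i^{d_i}$, $t_i\mapsto t_i$ identifies $A[z_1^{\pm1},\dots,z_n^{\pm1}][t_1^{\pm1},\dots,t_n^{\pm1}]$ with $A[w_1^{\pm1},\dots,w_n^{\pm1}][t_1^{\pm1},\dots,t_n^{\pm1}]$ — it is an isomorphism, with inverse $w_i\mapsto z_it_i^{-d_i}$. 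Therefore the first arrow is the composite of the localization $B[t_1,\ldots,t_n]\to B[t_1^{\pm1},\dots,t_n^{\pm1}]$ with this isomorphism. So the real content is: a $G$-torsor over $B[t_1,\ldots,t_n]$ whose restriction to $B[t_1^{\pm1},\dots,t_n^{\pm1}]$ is trivial is itself trivial. This is precisely the kind of statement proved in Lemmas~\ref{lem:quillen}--\ref{lem:x-quillen}: I would peel off one variable at a time, using Lemma~\ref{lem:quillen} (with $f=1$, or rather the Quillen-patching plus glueing argument) to pass from triviality over $(\Aff^1)_{t_i}$ to triviality over $\Aff^1$, working over the regular ring $B[t_1,\ldots,\widehat{t_i},\ldots,t_n]$ or its localizations. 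An induction on $n$ reduces to the one-variable assertion $H^1_{\et}(\Aff^1_C,G)\to H^1_{\et}((\Aff^1_C)_t,G)$ has trivial kernel for $C$ regular containing $k$, which is Lemma~\ref{lem:quillen} applied to the monic polynomial $f=t$.

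The main obstacle is bookkeeping rather than a genuinely new idea: one must be careful that at each stage of the induction the base ring remains regular and contains $k$, that the group scheme remains reductive of isotropic rank $\ge 1$ after each base change (true, since isotropic rank is stable under arbitrary base change), and that the chain of localizations and isomorphisms genuinely factors $\psi$ as claimed. A secondary subtlety is the interaction between inverting the $t_i$ and the substitution $z_i\mapsto w_it_i^{d_i}$: one should verify that the composite is literally base change along a ring homomorphism (so that pullback of torsors makes sense and the functoriality used is the naive one), which it is because all maps involved are $k$-algebra homomorphisms. Once these points are checked, the lemma follows by assembling Lemma~\ref{lem:quillen} (for the localization-in-$t_i$ step, run $n$ times) and Lemma~\ref{lem:const-inj} (for the passage to the rational function field in the $w_i$).
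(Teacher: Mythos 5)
Your treatment of the first arrow is fine: after inverting the $t_i$, the substitution $z_i\mapsto w_it_i^{d_i}$ is an isomorphism, so that arrow is a localization in the $t_i$ followed by an isomorphism, and its kernel is killed by iterating Lemma~\ref{lem:quillen} with $f=t$. The gap is in the second arrow. Lemma~\ref{lem:const-inj} applies only when the group scheme is defined over the coefficient ring untouched by the rationalization: in its statement $G$ lives over $A$ and the variables being rationalized are fresh ones. In your second arrow the variables being rationalized are the $w_i$, and $\psi_1^*G$ is \emph{not} constant in the $w_i$: it is the pullback of $G$ along $z_i\mapsto w_it_i^{d_i}$, so it genuinely depends on the products $w_it_i^{d_i}$ and does not descend to $A[t_1^{\pm1},\ldots,t_n^{\pm1}]$ (nor to $A[w_1^{\pm1},\ldots,w_n^{\pm1}]$). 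Replacing $A$ by $A'[t_1^{\pm1},\ldots,t_n^{\pm1}]$, as you propose, yields a statement about new variables, not about the $w_i$ already sitting inside the base of $\psi_1^*G$. So the citation does not prove the claim, and that claim is exactly where the whole difficulty of the lemma is concentrated; the factorization merely relocates it.

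What the paper does instead, and what any repair of your route needs, is a change of coordinates that puts the group back over the untouched coefficient ring. Working one variable at a time by induction on $n$, write $k(w)[t^{\pm1}]=\varinjlim_g k[w^{\pm1},t^{\pm1}]_g$ with $g\in k[w]$ monic and $g(0)\neq 0$; since $w=zt^{-d}$, one has $g(w)=t^{-Nd}f(t)$ where $f$ is a polynomial in $t$ with coefficients in $k[z^{\pm1}]$, leading coefficient in $k\setminus\{0\}$ and $f(0)=z^{N}$, so that $k[w^{\pm1},t^{\pm1}]_g=k[z^{\pm1},t]_{tf}$. This exhibits the problematic map as a localization of $B[z^{\pm1},t]$ at the essentially monic polynomial $tf(t)$ over the ring $B\otimes_k k[z^{\pm1}]$, over which $G$ is actually defined, and Lemma~\ref{lem:quillen} applies; the induction then absorbs $k(w_1)[t_1^{\pm1}]$ into the coefficient ring before treating the next variable. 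With this rewriting your second arrow could be handled for a single variable via Lemma~\ref{lem:x-quillen}, but without the change of coordinates and the variable-by-variable induction needed to reach $k(w_1,\ldots,w_n)$, the second arrow in your proposal remains unproved.
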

\begin{proof}
We prove the claim by induction on $n\ge 0$. The case $n=0$ is trivial.
To simplify the notation, set
$$
B=A[z_{2}^{\pm 1},\ldots,z_n^{\pm 1},t_{2},\ldots,t_n]$$
and $z=z_1$, $t=t_1$, $w=w_1$. Let $\phi:B[z^{\pm 1},t]\to B\otimes_k k(w)[t^{\pm 1}]$ be the map sending
$z$ to $wt^d$.
To prove the induction step for $n\ge 1$, it is enough to show that the induced map of \'etale cohomology
$$
\phi:H^1_{\et}\bigl(B[z^{\pm 1},t],G\bigr)\xrightarrow{z\mapsto wt^d} H^1_{\et}\bigl(B\otimes_k k(w)[t^{\pm 1}],\phi^*(G)\bigr)
$$
has trivial kernel, where $G$ is defined over $B[z^{\pm 1}]$.
Indeed, after that we can apply the induction assumption with $k$ substituted by $k(w_1)$ and $A$
substituted by $A\otimes_k k(w_1)[t_1^{\pm 1}]$.

We have
$$
B\otimes_k k(w)[t^{\pm 1}]=
\varinjlim\limits_{g} B\otimes_k k[w^{\pm 1}]_g[t^{\pm 1}]=\varinjlim\limits_{g} B\otimes_k k[w^{\pm 1},t^{\pm 1}]_g,
$$
where $g=g(w)$ runs over all monic polynomials in $k[w]$ with $g(0)\neq 0$. Let $N=\deg(g)\ge 1$. Since $\phi(z)=wt^d$, we have
$g(w)=g(\phi(z)t^{-d})=t^{-Nd}f(t)$, where $f(t)$ is a polynomial in $t$ with coefficients in
$k[\phi(z)^{\pm 1}]$ such that its leading coefficient is in $k\setminus 0$, and $f(0)=\phi(z)^N$. Then
$$
B\otimes_k k[w^{\pm 1},t^{\pm 1}]_g=B\otimes_k k[\phi(z)^{\pm 1},t]_{tf}.
$$
The group scheme $\phi^*(G)$ is defined over $B\otimes_k k[\phi(z)^{\pm 1}]$. Both terminal coefficients of $tf(t)$
are invertible in $k[\phi(z)^{\pm 1}]$, hence by Lemma~\ref{lem:quillen} applied to the regular ring
$B\otimes_k k[\phi(z)^{\pm 1}]$
the map
$$
H^1_{\et}\bigl(B[z^{\pm 1},t],G\bigr)\xrightarrow{z\mapsto wt^d} H^1_{\et}\bigl(B\otimes_k k[w^{\pm 1},t^{\pm 1}]_g,\phi^*(G)\bigr)=
H^1_{\et}\bigl(B\otimes_k k[\phi(z)^{\pm 1},t]_{tf},\phi^*(G)\bigr)
$$
has trivial kernel.

Since $H^1_{\et}(-,G)$ commutes with filtered direct limits, we conclude that $\phi$
has trivial kernel.

\end{proof}

\section{Proof of the main results}\label{sec:main}

To prove our main result Theorem~\ref{thm:sc}, we need the following property of loop reductive groups.

\begin{lem}["diagonal argument"]\cite[Lemma 4.1]{St-serr}\label{lem:diag}
Let $k$ be a field of characteristic $0$. Let $G$ be a loop reductive group scheme over $R=k[x_1^{\pm 1},\ldots,x_n^{\pm 1}]$.
For any integer $d>0$, denote by $f_{z,d}$ (respectively, $f_{w,d}$) the composition of $k$-homomorphisms
$$
R\to k[z_1^{\pm 1},\ldots,z_n^{\pm 1},w_1^{\pm 1},\ldots,w_n^{\pm 1}]
\to k[z_1^{\pm 1},\ldots,z_n^{\pm 1},(z_1w_1^{-1})^{\pm \frac1d},\ldots,(z_nw_n^{-1})^{\pm \frac1d}]
$$
sending $x_i$ to $z_i$ (respectively, to $w_i$) for every $1\le i\le n$. Then there is $d>0$ such that
$$
f_{z,d}^*(G)\cong f_{w,d}^*(G)
$$
as group schemes over $k[z_1^{\pm 1},\ldots,z_n^{\pm 1},(z_1w_1^{-1})^{\pm \frac1d},\ldots,(z_nw_n^{-1})^{\pm \frac1d}]$.
\end{lem}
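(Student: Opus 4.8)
The plan is to reduce the statement to the structure of loop reductive groups via their Galois-type cocycles, using the computation of $\pi_1(X,e)$ recorded in~\eqref{eq:pi1X}. Since $G$ is loop reductive, it is a twisted form of the split group $G_0$ by a cocycle $\xi$ in the image of $H^1\bigl(\pi_1(X,e),\Aut(G_0)(\bar k)\bigr)\to H^1_{\et}\bigl(X,\Aut(G_0)\bigr)$; as $\pi_1(X,e)=\hat\ZZ(1)^n\semil\Gal(\bar k/k)$ is profinite and $\Aut(G_0)(\bar k)$ is the $\bar k$-points of an algebraic group, any such cocycle factors through a finite quotient, hence there is $m>0$ so that $\xi$ is the inflation of a cocycle of $(\mu_m)^n\semil\Gal(\bar k/k)$, i.e. $G$ becomes split after the Kummer cover $k[x_1^{\pm 1/m},\ldots,x_n^{\pm 1/m}]$ (adjoining $\bar k$-coefficients as needed). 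First I would record this: there is $m>0$ and a finite Galois cover $R'=\tilde k[x_1^{\pm 1/m},\ldots,x_n^{\pm 1/m}]$ of $R$, with $\tilde k/k$ finite, over which $G\otimes_R R'\cong G_0\otimes_k R'$, and the descent datum is given by an explicit cocycle $\zeta$ of the Galois group $\Gamma=(\ZZ/m)^n\semil\Gal(\tilde k/k)$ valued in $\Aut(G_0)(\tilde k)$.

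The heart of the argument is then a direct computation that $f_{z,d}^*(G)$ and $f_{w,d}^*(G)$ carry the \emph{same} descent datum once $d$ is a suitable multiple of $m$. Choose $d$ divisible by $m$, and set $S=k[z_i^{\pm 1}, (z_iw_i^{-1})^{\pm 1/d}]$, the base ring of the lemma. Adjoining $z_i^{1/d}$ formally, both $f_{z,d}^*(G)$ and $f_{w,d}^*(G)$ become split over $S'=\tilde k[z_i^{\pm 1/d},(z_iw_i^{-1})^{\pm 1/d}]$, which contains $w_i^{\pm 1/d}=(z_i^{1/d})\cdot((z_iw_i^{-1})^{1/d})^{-1}$; so $S'$ is a common Galois cover. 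Under $f_{z,d}$ the variable $x_i^{1/m}$ maps to $z_i^{1/m}=(z_i^{1/d})^{d/m}$, and under $f_{w,d}$ it maps to $w_i^{1/m}=(w_i^{1/d})^{d/m}$. The Galois group of $S'/S$ acts on $z_i^{1/d}$ by a root of unity and on $(z_iw_i^{-1})^{1/d}$ by an independent root of unity; I would check that the induced action on the relevant $m$-th roots $z_i^{1/m}$ and $w_i^{1/m}$ is given by the \emph{same} character of $\Gal(S'/S)$ (both equal the $(d/m)$-th power of the character by which it acts on $z_i^{1/d}$, because $w_i^{1/d}$ differs from $z_i^{1/d}$ by $(z_iw_i^{-1})^{1/d}$, whose $(d/m)$-th power is an $m$-th root of $z_iw_i^{-1}$, and one arranges $d$ large enough that $(z_iw_i^{-1})$ already has an $m$-th root in $S$, hence this factor is Galois-invariant). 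Consequently the two twisting cocycles $f_{z,d}^*(\zeta)$ and $f_{w,d}^*(\zeta)$ on $\Gal(S'/S)$ coincide, which yields the desired isomorphism $f_{z,d}^*(G)\cong f_{w,d}^*(G)$ over $S$ by Galois descent.

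The main obstacle I expect is the bookkeeping in the previous paragraph: making precise how the cocycle $\zeta$ of $\Gamma=(\ZZ/m)^n\semil\Gal(\tilde k/k)$ pulls back under $f_{z,d}$ and $f_{w,d}$ to cocycles on $\Gal(S'/S)$, and verifying that the two pullbacks agree. This requires identifying $\Gal(S'/S)$ explicitly (it is an extension of $\Gal(\tilde k/k)$ by $(\ZZ/d)^n\times(\ZZ/d)^n$ coming from the two families of $d$-th roots), pinning down the two group homomorphisms $\Gal(S'/S)\to\Gamma$ through which the respective descent data factor, and showing these homomorphisms are equal after choosing $d$ a large enough multiple of $m$ — the key point being that the "diagonal" combination $z_i^{1/d}$ versus $w_i^{1/d}=z_i^{1/d}\cdot(z_iw_i^{-1})^{-1/d}$ only shifts by the factor $(z_iw_i^{-1})^{-1/d}$ which has trivial image in $\mu_m$ since $(z_iw_i^{-1})$ has an $m$-th root already in the base. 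Once this compatibility is in place, the isomorphism of group schemes is formal.
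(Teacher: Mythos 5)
Your argument is correct and is essentially the proof of the cited result \cite[Lemma 4.1]{St-serr} (the paper itself only quotes that lemma without reproving it): write the loop cocycle through a finite quotient $(\ZZ/m)^n\rtimes\Gal(\tilde k/k)$ with constant values in $\Aut(G_0)(\tilde k)$, take $d$ a multiple of $m$, and note that $z_i$ and $w_i=z_i\cdot\bigl((z_iw_i^{-1})^{1/d}\bigr)^{-d}$ have $m$-th roots in the common cover differing by the unit $(z_iw_i^{-1})^{1/m}$, which already lies in the base $S$, so the two pulled-back descent data coincide and Galois descent gives the isomorphism. One cosmetic slip in your final paragraph: the kernel of $\Gal(S'/S)\to\Gal(\tilde k/k)$ is $\mu_d^n$ (only the roots $z_i^{1/d}$ are newly adjoined, since $(z_iw_i^{-1})^{1/d}\in S$), not $(\ZZ/d)^n\times(\ZZ/d)^n$, but this does not affect the computation.
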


\begin{lem}\label{lem:loop}
Let $k$ be a  field of characteristic $0$, and let $G$ be a loop reductive group of isotropic rank\ $\ge 1$
over $R= k[x_1^{\pm 1},\ldots,x_n^{\pm 1}]$. For any regular ring $A$ containing $k$,  the natural map
$$
H^1_{\et}\bigl(k[x_1^{\pm 1},\ldots,x_n^{\pm 1}]\otimes_k A,G\bigr)\to H^1_{\et}\bigl(k(x_1,\ldots,x_n)\otimes_k A,G\bigr)
$$
has trivial kernel.
\end{lem}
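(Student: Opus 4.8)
The plan is to reduce the statement to the already-established Lemma~\ref{lem:zwt} and Lemma~\ref{lem:const-inj} by means of the diagonal argument (Lemma~\ref{lem:diag}). First I would fix a class $\xi$ in the kernel of the map in question, i.e. a $G$-torsor over $R\otimes_k A = A[x_1^{\pm1},\dots,x_n^{\pm1}]$ that becomes trivial over $A\otimes_k k(x_1,\dots,x_n)$. Since $G$ is loop reductive, Lemma~\ref{lem:diag} supplies an integer $d>0$ and an isomorphism $f_{z,d}^*(G)\cong f_{w,d}^*(G)$ over $S=k[z_1^{\pm1},\dots,z_n^{\pm1},(z_1w_1^{-1})^{\pm 1/d},\dots,(z_nw_n^{-1})^{\pm 1/d}]$. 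I would rename $u_i=(z_iw_i^{-1})^{1/d}$, so that $z_i = w_i u_i^{d}$; then $S=k[w_1^{\pm1},\dots,w_n^{\pm1},u_1^{\pm1},\dots,u_n^{\pm1}]$, and the two pullbacks of $G$ under $x_i\mapsto z_i=w_iu_i^d$ and under $x_i\mapsto w_i$ become isomorphic after tensoring with $A$.

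The key step is then the following substitution chain. Base-changing $\xi$ along $R\otimes_k A\to S\otimes_k A$ via $x_i\mapsto z_i=w_iu_i^d$ yields a torsor $\xi_z$ under $f_{z,d}^*(G)\cong f_{w,d}^*(G)$ over $A[w_1^{\pm1},\dots,w_n^{\pm1},u_1^{\pm1},\dots,u_n^{\pm1}]$. Viewing this through the isomorphism, $\xi_z$ is a torsor under the group $f_{w,d}^*(G)$, which is defined over $A[w_1^{\pm1},\dots,w_n^{\pm1}]$ and is precisely the base change of $G$ along $x_i\mapsto w_i$. Now I invoke Lemma~\ref{lem:zwt}, applied with the base ring $A$ and the group $f_{w,d}^*(G)$ over $A[w_1^{\pm1},\dots,w_n^{\pm1}]$ (using the $u_i$ as the polynomial-type variables $t_i$ after clearing the inverses, or more precisely applying it over $A[w_1^{\pm1},\dots,w_n^{\pm1}]$ with the $t_i=u_i$): the map it provides has trivial kernel, so to conclude $\xi_z$ is trivial it suffices to check that $\xi_z$ dies after the further specialization $w_i\mapsto$ (new variable) $\cdot\, u_i^{d_i}$ landing in $A\otimes_k k(\text{new variables})[u_1^{\pm1},\dots,u_n^{\pm1}]$. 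But along the composite of all these substitutions, $x_i$ is sent into a Laurent polynomial ring over a rational function field, and the image of $x_i$ is transcendental, so the composite factors through $A\otimes_k k(x_1,\dots,x_n)$ — over which $\xi$ was assumed trivial by hypothesis. Hence $\xi_z$ is trivial, and therefore $\xi$ is trivial over $S\otimes_k A$.

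Finally I would descend from triviality over $S\otimes_k A=A[w^{\pm1},u^{\pm1}]$ back to triviality over $R\otimes_k A=A[x^{\pm1}]$. The map $R\otimes_k A\to S\otimes_k A$ we used sends $x_i\mapsto w_iu_i^d$; composing with $u_i\mapsto 1$ recovers the identity $R\otimes_k A\to R\otimes_k A$ (with $w_i$ playing the role of $x_i$), so triviality of $\xi_z$ over $S\otimes_k A$ specializes to triviality of $\xi$ over $R\otimes_k A$. The main obstacle I anticipate is purely bookkeeping: keeping straight the three families of variables ($x_i$, $z_i$/$w_i$, $u_i$), making sure that after each substitution the relevant group scheme is indeed defined over the smaller Laurent polynomial ring so that Lemma~\ref{lem:zwt} and Lemma~\ref{lem:const-inj} apply verbatim, and checking that the overall composite of substitutions genuinely factors through $A\otimes_k k(x_1,\dots,x_n)$ so that the kernel hypothesis on $\xi$ can be used. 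Once the variable changes are set up correctly, no new ideas beyond the cited lemmas are needed.
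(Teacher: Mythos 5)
Your overall toolkit (diagonal argument, Lemma~\ref{lem:zwt}, Lemma~\ref{lem:const-inj}, and a retraction at the end) is the right one, but the way you compose the substitutions has two genuine gaps. First, Lemma~\ref{lem:zwt} is applied to the wrong object: its source is $H^1_{\et}\bigl(A[w_1^{\pm 1},\ldots,w_n^{\pm 1},u_1,\ldots,u_n],\,\cdot\,\bigr)$ with the $u_i$ \emph{not} inverted, whereas your class $\xi_z$ (the pullback of $\xi$ along $x_i\mapsto w_iu_i^d$) only lives over $A[w^{\pm 1},u^{\pm 1}]$, since $x_i$ is a unit and hence its image forces $u_i$ to be inverted. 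There is no given extension of $\xi_z$ across $u_i=0$, and ``clearing the inverses'' is not a formal step --- producing such an extension is exactly the kind of Quillen-type statement the lemmas are designed to prove. The correct arrangement (and the one the paper uses) is to first pull $\xi$ back along $x_i\mapsto z_i$ only, view it as a torsor over $A[z^{\pm 1},t_1,\ldots,t_n]$ that is constant in the $t_i$, and apply Lemma~\ref{lem:zwt} to \emph{that} torsor with the substitution $z_i\mapsto w_it_i^{d}$; only afterwards does one invoke the diagonal isomorphism to replace $G_z$ by $G_w$.

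Second, and more seriously, your use of the kernel hypothesis fails: the composite $R\otimes_kA\to A\otimes_k k(v_1,\ldots,v_n)[u_1^{\pm1},\ldots,u_n^{\pm1}]$ sending $x_i$ to $v_iu_i^{d_i+d}$ does \emph{not} factor through $A\otimes_k k(x_1,\ldots,x_n)$. Transcendence of the images is not enough; one needs every nonzero polynomial in the $x_i$ to become a unit in the target, and e.g. $x_1+1\mapsto v_1u_1^{d_1+d}+1$ is not a unit in $k(v)[u^{\pm 1}]\otimes_kA$ (units there are monomials in the $u_i$ up to units of $k(v)\otimes_k A$). So triviality of $\xi$ over $k(x_1,\ldots,x_n)\otimes_kA$ cannot be transported to the ring where you want to kill $\xi_z$. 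To use the hypothesis one must land over the full rational function field in \emph{all} variables, i.e. in $k(w_1,\ldots,w_n,t_1,\ldots,t_n)\otimes_kA$, and the passage from $k(w)[t^{\pm1}]\otimes_kA$ to $k(w,t)\otimes_kA$ with trivial kernel is precisely an application of Lemma~\ref{lem:const-inj} --- which is only available because, after the diagonal swap, the group $G_w$ does not depend on the $t_i$. In your arrangement the extra substitution $w_i\mapsto v_iu_i^{d_i}$ makes the group depend on the $u_i$ again, so Lemma~\ref{lem:const-inj} cannot rationalize the remaining Laurent variables and the gap cannot be closed as written. (Your final step, specializing $u_i\mapsto 1$ to descend from $S\otimes_kA$ back to $R\otimes_kA$, is fine and corresponds to the retraction used for the map $f_1$ in the paper.)
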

\begin{proof}

We apply Lemma~\ref{lem:diag} to $G$.
Set
$$
t_i=(z_iw_i^{-1})^{1/d},\quad 1\le i\le n,
$$
where $z_i,w_i$, and $d$ are as in that Lemma. Note that this is equivalent to
$$
z_i=w_it_i^d,\quad 1\le i\le n.
$$
We denote by $G_z$ the group scheme over
$k[z_1^{\pm 1},\ldots,z_n^{\pm 1}]$ which is the pull-back of $G$ under the $k$-isomorphism
$$k[x_1^{\pm 1},\ldots,x_n^{\pm 1}]\xrightarrow{x_i\mapsto z_i}k[z_1^{\pm 1},\ldots,z_n^{\pm 1}].$$
The group scheme $G_w$ over $k[w_1^{\pm 1},\ldots,w_n^{\pm 1}]$ is defined analogously. By Lemma~\ref{lem:diag}
$G_z$ and $G_w$ are isomorphic after pull-back to
$$
k[z_1^{\pm 1},\ldots,z_n^{\pm 1},t_1^{\pm 1},\ldots,t_n^{\pm 1}]=
k[w_1^{\pm 1},\ldots,w_n^{\pm 1},t_1^{\pm 1},\ldots,t_n^{\pm 1}].
$$

Consider the following commutative diagram. In this diagram, the horizontal maps $j_1$ and $j_2$ are the natural ones,
and all maps always take variables $t_i$ to $t_i$, $1\le i\le n$, and $A$ to $A$.
The bijections $g_1$ and $g_2$ exist by Lemma~\ref{lem:diag}.

\begin{tikzpicture}
\matrix (m) [matrix of math nodes,row sep=25pt,column sep=3em,minimum width=2em]
  {
   H^1_{\et}\Bigl(k[x_1^{\pm 1},\ldots,x_n^{\pm 1}]\otimes_k A,G\Bigr) & H^1_{\et}\Bigl(k(x_1,\ldots,x_n)\otimes_k A,G\Bigr) \\
    H^1_{\et}\Bigl(k[z_1^{\pm 1},\ldots,z_n^{\pm 1},t_1,\ldots,t_n]\otimes_k A,G_z\Bigr)& H^1_{\et}\Bigl(k(z_1,\ldots,z_n,t_1,\ldots,t_n)\otimes_k A,G_z\Bigr)   \\
    H^1_{\et}\Bigl(k(w_1,\ldots,w_n)[t_1^{\pm 1},\ldots,t_n^{\pm 1}]\otimes_k A,G_z\Bigr) &  \\
    H^1_{\et}\Bigl(k(w_1,\ldots,w_n)[t_1^{\pm 1},\ldots,t_n^{\pm 1}]\otimes_k A,G_w\Bigr) & H^1_{\et}\Bigl(k(w_1,\ldots,w_n,t_1,\ldots,t_n)\otimes_k A,G_w\Bigr)\\
  };
\path[-stealth]
     (m-1-1)       edge node [above] {$j_1$}  (m-1-2)
    (m-4-1.east) edge node [above] {$j_2$} (m-4-2)
;
\path[-stealth](m-1-2) edge node [left] {$\st f_2\colon x_i\mapsto z_i$}  (m-2-2);
\path [-stealth]
    (m-1-1)  edge node [left] {$\st f_1\colon x_i\mapsto z_i$}(m-2-1)
    (m-2-1) edge node [left] {$\st h\colon z_i\mapsto w_it_i^d$} (m-3-1)
    (m-3-1) edge node [left] {$\st g_1$} (m-4-1)
    (m-3-1) edge node [right] {$\cong$} (m-4-1)
    (m-2-2) edge node [left] {$\st g_2\colon  z_i\mapsto w_it_i^d$} (m-4-2)
    (m-2-2) edge node [right] {$\cong$}  (m-4-2);
;

\end{tikzpicture}

In order to prove that $j_1$ has trivial kernel, it is enough to show that all maps $j_2,g_1,h,f_1$ have trivial kernels.
The map $j_2$ has trivial kernel by Lemma~\ref{lem:const-inj}. As explained above, $g_1$ is bijective.
The map $h$ is has trivial kernel by Lemma~\ref{lem:zwt}. Finally, the map $f_1$ has trivial kernel,
since it has a retraction. Therefore, the map $j_1$ has trivial kernel.
\end{proof}

\begin{proof}[Proof of Theorem~\ref{thm:sc}]
To prove the first statement of the theorem, it is enough to show that if $G_F$ has isotropic rank $\ge 1$, then the same holds for $G$. Also,
we can assume from the start that $G$ is an adjoint reductive group over $R$. Then
$G$ is an inner twisted form of a uniquely determined quasi-split adjoint reductive $R$-group $G_{qs}$,
given by a cocycle class $\xi\in H^1_{\et}(S,\,G_{qs})$~\cite[Exp.~XXIV 3.12.1]{SGA3}. By definition, $G_{qs}$
contains a maximal $R$-torus, hence it is loop reductive.

By~\cite[Theorem 5.2]{ChGP-class} there is a
cocycle $\eta\in H^1_{\et}(R,G_{qs})$ such that the corresponding twisted group $H={}^\eta G_{qs}$ is also loop reductive,
and $\xi\in H^1_{Zar}(R,H)$. Then $G_K\cong H_K$ and
$G_F\cong H_F$. Since $H$ is loop reductive, by~\cite[Corollary 7.4]{GiPi-mem}
$H$ has isotropic rank $\ge 1$ if and only if $H_K$ has isotropic rank $\ge 1$ if and only if $H_F$ has isotropic rank $\ge 1$.
Thus, if $G_F$ has isotropic rank $\ge 1$, then $H$ has isotropic rank $\ge 1$ over $R$.
Then  by Lemma~\ref{lem:loop} we have $H^1_{Zar}(R,H)=1$. Then $G\cong H$. Consequently,
 $G$ has isotropic rank $\ge 1$ over $R$ and also $G$ is loop reductive.

To prove the second statement of the theorem, we note that the adjoint group $G^{ad}=G/\Cent(G)$ is loop reductive
by the above argument. Then $G$ is also loop reductive, since the maximal tori of $G$ and $G^{ad}$ are in bijective
correspondence by~\cite[Exp. XII 4.7.c]{SGA3}. Then the rest of the
second statement  holds by Lemma~\ref{lem:loop}.
\end{proof}

\begin{proof}[Proof of Corollary~\ref{cor:2}]
It was proved in~\cite[Theorem 3.17]{GiPi07} that boundary map
$$\delta_G:H_{\et}^1(R,G)\to H^2_{\et} (R,\mu)$$
induces
a bijection between $H^1_{loop}(R,G)$ and $H^1_{\et}(R,\mu)$, where $H^1_{loop}(R,G)\subset H^1_{\et}(R,G)$
is the subset of loop torsors, i.e. such $G$-torsors that the corresponging twisted form of $G$ is loop reductive.
In particular, the boundary map is surjective, and it remains to prove that it is injective.
Also, \cite[Theorem 2.7]{GiPi07} implies the conjecture for groups of pure type $E_8$.
If groups of this type occur as normal subgroups
in $G$, then they are necessarily direct factors, since they have trivial centers.
Hence we can assume that $G$ does not have semisimple normal subgroups of types $E_8$ or $A_n$, $n\ge 1$.

Set $K=k(x_1,x_2)$. Since $K$ has cohomological dimension 2, and for central simple algebras over a finite extension of
$K$ index coincides with exponent~\cite{dJ-perind}, the group $G_K$ is subject to~\cite[Theorems 1.2 and 2.1]{CTGP}. The latter
theorems imply that $G_K$ has isotropic rank $\ge 1$ over $K$, and that $H^1_{\et}(K,G^{sc})=1$.
In particular, $H_{\et}^1(K,G)\to H^2_{\et} (K,\mu)$ is bijective.

By Theorem~\ref{thm:sc} the fact that $G_K$ has isotropic rank $\ge 1$ implies that $G$  also has isotropic rank $\ge 1$
and is loop reductive.
Then $G^{sc}$ is loop reductive as well, and has isotropic rank $\ge 1$, since the maximal tori and parabolic subgroups
of $G$ and $G^{sc}$ are in bijective correspondence.
Then Theorem~\ref{thm:sc} applied to $G^{sc}$
implies that $H_{\et}^1(R,G^{sc})\to H^1_{\et} (K,G^{sc})$ has trivial kernel. Hence
$H_{\et}^1(R,G^{sc})$ is trivial and $\delta_G$ has trivial kernel.
Since all fibers of $\delta_G$ are in bijective correspondence with
kernels of $\delta_{G'}$ for suitable twisted forms $G'$ of $G$, we conclude that $\delta_G$ is injective.
\end{proof}

\begin{proof}[Proof of Corollary~\ref{cor:K1}]
Under the additional assumption that $G$ is loop reductive, the claim holds by~\cite[Theorem 1.2]{St-serr}. This assumption
is made redundant by Theorem~\ref{thm:sc}.

\end{proof}

\renewcommand{\refname}{References}

\end{document}